\newtheorem{thm}{Theorem}[section]
\newtheorem{problem}[thm]{Problem}
\theoremstyle{definition}
\newtheorem{lem}[thm]{Lemma}
\theoremstyle{definition}
\newtheorem{defn}[thm]{Definition}
\newtheorem{rem}[thm]{Remark}
\numberwithin{equation}{section}
\def \R { {\mathbb R} }
\newcommand{\demph}[1]{\emph{\color{Blue}#1}}
\newcommand{\up}[1]{ {\color{Magenta}#1} }
\newcommand{\dwn}[1]{ {\color{NavyBlue}#1} }
\newcommand{\hz}[1]{ {\color{OliveGreen}#1} }
\newcommand{\circl}[1]{\raisebox{.5pt}{\textcircled{\raisebox{-.9pt} {#1}}}}
\def \alf { {\color{Magenta}\alpha} }
\def \bet { {\color{NavyBlue}\beta} }
\def \gam { {\color{OliveGreen}\gamma} }
 \title{Combinatorial and computational investigations of Neighbor-Joining bias}
\author[Davidson]{Ruth Davidson}
\address{
Ruth Davidson\\
Bellingham, WA, U.S.A.
}
\email{ruth.davidson.math@gmail.com}
\urladdr{}
\author[Mart\'in del Campo]{Abraham Mart\'in del Campo}
\address{Abraham Mart\'in del Campo\\
         Centro de Investigaci\'on en Matem\'aticas, A.C.\\
         Jalisco S/N, Col. Valenciana\\
         36023, Guanajuato, Gto.  \\
         M\'exico}
\email{abraham.mc@cimat.mx}
\urladdr{http://personal.cimat.mx:8181/~{}abraham.mc/}
\thanks{Work of Mart\'in del Campo supported by CONACYT under grant
A1-S-30035 and C\'atedras-1076}
\thanks{Ruth Davidson was partially supported by U.S. N.S.F. grant DMS-1401591}
\begin{document}
\pgfdeclarelayer{background}
\pgfdeclarelayer{foreground}
\pgfsetlayers{background,main,foreground}

\begin{abstract}
The Neighbor-Joining algorithm is a popular distance-based phylogenetic method
that computes a tree metric from a dissimilarity map arising from biological data.
Realizing dissimilarity maps as points in Euclidean space, the 
algorithm 
partitions the input space into polyhedral regions indexed by the combinatorial 
type of the trees returned.
A full combinatorial description of these regions has not been found yet; different sequences of Neighbor-Joining agglomeration events can produce the same combinatorial tree, therefore associating multiple geometric regions to the same algorithmic output. 
We resolve this confusion by defining agglomeration orders on trees, leading to a bijection between distinct regions of the output space and weighted Motzkin paths. 
As a result, we give a formula for the number of polyhedral regions depending only on the number of taxa. We conclude with a computational comparison between these polyhedral regions, to unveil biases introduced in any implementation of the algorithm.

\end{abstract}

\maketitle

\section{Introduction}

The Neighbor-Joining (NJ) algorithm~\cite{NJ87} is a polynomial-time phylogenetic tree construction method. 
It is \demph{agglomerative}, so it constructs ancestral relationships between taxa by clustering the most closely related taxa at each step until a complete phylogeny is formed.  
The performance of the NJ algorithm has been studied from multiple mathematical and biological perspectives~\cite{Bry05, EHLY08, EY07, GS06}. Moreover, some statistical conditions have been given to guarantee a good performance of the algorithm for different types of biological data~\cite{Att97, MLP09}.
Due to its speed and theoretical performance guarantees, NJ remains a popular tool in the design of phylogenetic pipelines for large and complex data sets \cite{SNPhylo, LiveNJ, NJst}.  

The NJ algorithm takes as input a dissimilarity map $D$ between $n$ taxa and returns an unrooted binary tree with edge weights forming a tree metric.  One established paradigm for evaluating the performance of the NJ algorithm is to consider it as a heuristic for either the Least Squares Phylogeny (LSP) problem, which is NP-complete \cite{Day87}, or the Balanced Minimum Evolution (BME) problem, which is a special case of a weighted LSP approach \cite{DG04}. 
The LSP problem asks for the tree metric $T$ that minimizes the function
$$
\sqrt{\sum_{{i, j} \in {[n] \choose 2}} (D_{i,j}-T_{i,j})^{2}}.
$$
This perspective gives rise to the problem of determining whether a given heuristic for LSP or BME is biased, favoring certain phylogenies over others in its output. 
Addressing this problem requires a clear accounting of the possible outputs of the NJ algorithm.

Dissimilarity maps are symmetric matrices that can be realized as points in a Euclidean space.
In this geometric setting, the space of all tree metrics
form a \demph{polyhedral fan}~\cite{EY07}, which is a union of polyhedral cones meeting along common faces. The geometry of the space of tree metrics is well-understood \cite{Phyl, SS04}.
Phylogenetic inference methods that take a dissimilarity map as an input divide the Euclidean space into a family of subsets indexed by the combinatorial type of the trees returned.
For the NJ algorithm, these subsets are also polyhedral regions, as its decision criteria
are linear inequalities on the input data.

Comparing these polyhedral cones had lead to new computational methods to evaluate the performance of the NJ algorithm as a heuristic for LSP in~\cite{DS14} and of NJ as a heuristic for BME in~\cite{EHLY08}.  These methods are based on measuring the spherical volume of the cones, which is the volume of their intersection with the unit sphere. 
The spherical volume of polyhedral cones had unveiled unexpected bias in the UPGMA method, an older agglomerative phylogenetic method that has poorer performance guarantees in comparison to NJ~\cite{DS13}. 

The contribution of this article is that we give a formula that enumerates the polyhedral cones in the partition returned by NJ algorithm, by giving a bijection between the cones and weighted Motzkin paths.  We also explore the inherent bias in the algorithm by computing the spherical fraction of the cones and noting significant variation between some cones, similar to those found in \cite{DS13} for the UPGMA algorithm.  

In Section~\ref{S:NJ} we give a in-depth description of NJ and provide related definitions necessary to describe distinct NJ outputs.  In Section~\ref{S:combinatoricsNJ} we give a bijection between weighted Motzkin paths and labeled Newick strings to describe the outputs combinatorially.  Lastly, in Section~\ref{S:volumes}  we estimate the spherical fraction of the NJ cones and unveil a bias. We propose two ways to correct the bias, and display the performance of these corrections via computational experiments for small numbers of taxa.

\section{The Neighbor-Joining Algorithm}\label{S:NJ}

A \demph{graph} is a pair of sets $\{V,E\}$ called vertices and edges, with the latter representing relations between pairs of vertices. 
The \demph{degree of a vertex} is the number of edges adjacent to it.
A \demph{path} in a graph is a sequence of edges joining a sequence of vertices without repetition of vertices, except for possibly the start and end vertices. 
If the sequence starts and ends at the same vertex, the path is called a \demph{cycle}.
A graph without cycles is a \demph{tree}.
The vertices of a tree are usually called \demph{nodes}, except for those of degree one which are called \demph{leaves}.
In trees, there is a unique path between any two vertices.
A tree is \demph{rooted} if there is a distinguished vertex $\rho$ called the \demph{root}.
If $T$ is a rooted tree, there is a partial order on the vertices of $T$ induced by the root $\rho$ and the unique path between vertices, such that $\rho \leq v$ for all vertices $v$ of $T$. A \demph{binary tree} is a tree where all vertices have degree three except for the leaves and the root, if a root exists.
We will restrict our attention to unrooted binary trees.  An unrooted binary tree with $n$ leaves has $n{-}2$ nodes and $2n{-}3$ edges. 
We also consider the \demph{star tree}, a non-binary unrooted tree with exactly one node that is adjacent to all leaves. We illustrate a star tree in the left image of Figure~\ref{F:star&2ndstep}.

\subsection{Description of the algorithm}\label{SS:DescriptionNJ}

Let $[n]:= \{1, \ldots, n\}$. A \demph{dissimilarity map} is a function 
$D: [n] \times [n] \to \R_{\geq 0}$ such that $D(a,b) = D(b,a)$, $D(a,a) = 0$, and $D(a,b) \geq 0$ for all $a,b \in [n]$. 
A dissimilarity map is \demph{additive} or a \demph{tree metric}, if there exists a tree $T$ with edges $E$ with a weight function $w: E \to \R_{\geq 0}$ on $T$ so that $D(a,b)$ equals the
sum of the edge weights on the unique path from $a$ to $b$ in $T$.

The NJ algorithm takes a dissimilarity map as input and returns an unrooted binary tree with a tree metric. It is useful to view the progress of the NJ algorithm as a series of graph transformations. 
The algorithm starts from a star tree $t_n$, a graph with $n$ leaves and only one node ${\color{Blue}\mathcal{ O}}$ adjacent to all leaves, as illustrated on the left of  Figure~\ref{F:star&2ndstep}.  
Throughout the algorithm, the node $\mathcal O$ plays an important role; 
thus, we give special names to the vertices adjacent to $\mathcal O$. 
\begin{defn}\label{dfn:StemsAndBouquets}
  Vertices adjacent to $\mathcal O$ are called \demph{boughs}. Vertices that are boughs can be either leaves or internal nodes, so we refer to them as \demph{stems} and \demph{bouquets} respectively.
\end{defn}

In the recursive step, the algorithm takes a tree $t_{k}$ with $k$ boughs, it selects a pair of them and joins them by adding a new node adjacent to $\mathcal O$ in a way that the resulting tree $t_{k-1}$
has $k{-}1$ boughs. 
The algorithm iterates this step until there are only three boughs. 
Figure~\ref{F:star&2ndstep} illustrates the first two steps in the NJ algorithm, starting from the star $t_7$ and the 
next graph constructed by the NJ algorithm corresponding to the tree $t_{6}$, 
obtained from the star by joining the stems $a,b$ by introducing a bouquet $u$.
The subgraph consisting of only two adjacent leaves is sometimes called a \demph{cherry}, and the recursive step of the NJ algorithm is then referred as \demph{cherry picking} step~\cite{EY07}. This term served as inspiration for the names in Definition~\ref{dfn:StemsAndBouquets}, as it resembles the way the NJ algorithm creates trees. 

%
\begin{figure}[tb]
   \begin{tikzpicture}
        \node[label={[xshift=9pt, yshift=-13.5pt]$\mathcal O$}] (0) at (0,0) {$\bullet$};
        \node[label={[xshift=5.5pt, yshift=-6.5pt]$a$}] (2) at (0.7071,0.7071) {$\bullet$};
        \node (3) at (0,1) {$\bullet$};
        \node (4) at (-0.7071,0.7071) {$\bullet$};
        \node (5) at (-1,0) {$\bullet$};
        \node (6) at (-0.7071, -0.7071) {$\bullet$};
        \node (7) at (0,-1) {$\bullet$};
        \node[label={[xshift=5.5pt, yshift=-16.5pt]$b$}] (8) at (0.7071, -0.7071) {$\bullet$};
        \draw[Purple, ultra thick, shorten <=-7pt, shorten >=-8pt] (0)--(2);
        \draw[Purple, ultra thick, shorten <=-4.7pt, shorten >=-5pt] (0)--(3);
        \draw[Purple, ultra thick, shorten <=-7pt, shorten >=-7.5pt] (0)--(4);
        \draw[Purple, ultra thick, shorten <=-5pt, shorten >=-5pt] (0)--(5);
        \draw[Purple, ultra thick, shorten <=-7.5pt, shorten >=-7pt] (0)--(6);
        \draw[Purple, ultra thick, shorten <=-5pt, shorten >=-4.5pt] (0)--(7);
        \draw[Purple, ultra thick, shorten <=-7.5pt, shorten >=-7.3pt] (0)--(8);
   \end{tikzpicture}
\hspace{35pt}
   \begin{tikzpicture}
        \node[label={[xshift=6pt, yshift=-8pt]$\mathcal O$}] (0) at (0,0) {$\bullet$};
        \node[label={[xshift=7pt, yshift=-12pt]$u$}] (x) at (0.866025,0) {$\bullet$};
        \node[label={[xshift=5.5pt, yshift=-6.5pt]$a$}] (2) at (1.2071,0.7071) {$\bullet$};
        \node (3) at (0,1) {$\bullet$};
        \node (4) at (-0.866025,0.7071) {$\bullet$};
        \node (5) at (-1,0) {$\bullet$};
        \node (6) at (-0.866025, -0.7071) {$\bullet$};
        \node (7) at (0,-1) {$\bullet$};
        \node[label={[xshift=5.5pt, yshift=-16.5pt]$b$}] (8) at (1.2071,-0.7071) {$\bullet$};
        \draw[Purple, ultra thick, shorten <=-5pt, shorten >=-5pt] (0)--(x);
        \draw[Purple, ultra thick, shorten <=-4.9pt, shorten >=-5.45pt] (x)--(2);
        \draw[Purple, ultra thick, shorten <=-5.1pt, shorten >=-5.45pt] (x)--(8);
        \draw[Purple, ultra thick, shorten <=-7pt, shorten >=-7.5pt] (0)--(4);
        \draw[Purple, ultra thick, shorten <=-5pt, shorten >=-5pt] (0)--(5);
        \draw[Purple, ultra thick, shorten <=-7pt, shorten >=-7pt] (0)--(6);
        \draw[Purple, ultra thick, shorten <=-5pt, shorten >=-4.5pt] (0)--(7);
        \draw[Purple, ultra thick, shorten <=-4.6pt, shorten >=-5pt] (0)--(3);
   \end{tikzpicture}
    \caption{The trees corresponding to the first and second step in the NJ algorithm}
    \label{F:star&2ndstep}
\end{figure}
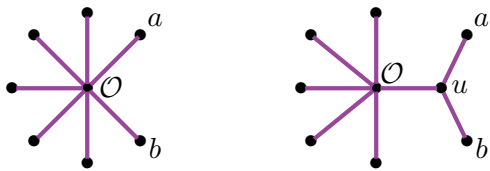

The algorithm selects a pair of boughs $a, b$ to agglomerate based on the $Q$-criterion, which is a function given by
\begin{equation}\label{Eq:Q_criterion}
    Q(a,b) = (k{-}2)D(a,b) - \sum_{c\in X} D(a,c) - \sum_{c\in X} D(b,c),
\end{equation}
where $X$ is the set of boughs, with $k=|X|$.
The NJ algorithm selects the vertices $a,b$ where $Q$ is minimal.
It has been shown that this criterion uniquely determines the NJ algorithm~\cite{Bry05}, and that
a pair of leaves minimizing
$Q$ come from a cherry in the true tree~\cite{NJ87, SK88}.
To agglomerate the selected nodes $a,b$, the NJ algorithm introduces a new bouquet $u$ adjacent to them, resulting in a tree with one bough less. It then estimates the distance in this new tree from the remaining vertices to $u$ via the reduction formula
\begin{equation}\label{Eq:ReductionStep}
    D(c,u) = \frac{1}{2}\left( D(a,c)+ D(b,c) - D(a,b) \right).
\end{equation}
%
We sometimes use $D_k$ and $Q_k$ to denote the dissimilarity map and the $Q$-criterion associated to the tree $t_k$ with
$k$ boughs. Thus, the last step of the algorithm is decided by a pair of nodes $a,b$ that minimize $Q_4$. However, in this last step there is more than one minimizing pair, as we demonstrate in the following lemma.

\begin{lem}\label{L:double_minima}
For $n = 4$, the matrix $Q_4$ always achieves its minimum in exactly two entries.
\end{lem}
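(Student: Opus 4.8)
The plan is to compute the $Q$-criterion explicitly when there are only four boughs and to read off a hidden symmetry. Write $X=\{a,b,c,d\}$ for the set of boughs and, for $x\in X$, set $R_x=\sum_{y\in X}D(x,y)$. Since $k-2=2$, formula~\eqref{Eq:Q_criterion} becomes $Q(x,y)=2D(x,y)-R_x-R_y$ for each pair $\{x,y\}\in\binom{X}{2}$. Expanding $R_a+R_b=2D(a,b)+D(a,c)+D(a,d)+D(b,c)+D(b,d)$ yields the clean expression $Q(a,b)=-\bigl(D(a,c)+D(a,d)+D(b,c)+D(b,d)\bigr)$, and likewise $Q(c,d)=-\bigl(D(c,a)+D(c,b)+D(d,a)+D(d,b)\bigr)$.

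The key point is then immediate: by symmetry of $D$ these two sums are the same, so $Q(a,b)=Q(c,d)$ for every splitting of $X$ into complementary pairs. Equivalently, setting $\Sigma:=\sum_{\{i,j\}\in\binom{X}{2}}D(i,j)$, one has $Q(x,y)=D(x,y)+D(x',y')-\Sigma$ where $\{x',y'\}=X\setminus\{x,y\}$, a formula that is visibly invariant under swapping $\{x,y\}$ with its complement. It follows that the six off-diagonal entries of $Q_4$ realize only three distinct values, one for each of the three ways $ab|cd$, $ac|bd$, $ad|bc$ of splitting the four boughs into two pairs; moreover the two boughs giving a common value produce the same tree $t_3$ after agglomeration, so these three values are naturally indexed by the three possible final topologies.

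The minimum of $Q_4$ is therefore the smallest of these three values, and it is attained at precisely the two entries indexed by a pairing achieving it together with its complement; in particular $Q_4$ achieves its minimum in (at least) two entries, which is what the subsequent discussion of the final agglomeration step needs. The step I expect to require the most care is the word \emph{exactly}: to rule out a minimum attained in four entries one must know that the minimizing pairing is unique, i.e.\ that no two of the three sums $D(x,y)+D(x',y')$ tie at the minimum. This holds outside a measure-zero locus of dissimilarity maps (for instance it can fail on a non-metric input such as $D(1,4)=D(2,3)=1$ with all other entries $0$), so I would either state the lemma under a genericity hypothesis on $D$ or interpret ``exactly two'' in that generic sense, and use it accordingly when analysing the final step of the algorithm.
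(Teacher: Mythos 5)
Your proof is correct and takes essentially the same route as the paper: expand the $Q$-criterion for $k=4$ and observe by symmetry of $D$ that $Q(a,b)=Q(c,d)$ for complementary pairs, so the minimum is attained at least twice. Your extra remark about the word \emph{exactly} is well taken --- the paper's own argument likewise only establishes ``at least two,'' and ruling out a four- or six-fold tie indeed requires the genericity of the input (in the spirit of the paper's subsequent remark on measure-zero coincidences for $n>4$).
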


\begin{proof}
Let $\{a,b,c, d\}$ be the four vertices corresponding to $t_4$ and suppose, without loss of generality, that the minimum in $Q_4$ is achieved in $Q(a,b)$.
Then, equation~\eqref{Eq:Q_criterion} writes as
\begin{align*}
Q(a,b) &= 2 D(a,b) - (D(a,b) + D(a,c) + D(a,d)) - (D(a,b) + D(b,c) + D(b,d)) \\
  &=  -(D(a,c) + D(a,d)) - (D(b,c) + D(b,d))\\
  &=  -(D(a,c) + D(b,c)) - (D(a,d) + D(b,d))\\
 &=  - (D(a,c) + D(b,c) + D(c,d)) - (D(a,d) + D(b,d) + D(c,d)) + 2 D(c,d) = Q(c,d).
\end{align*}
Therefore, we get that $Q(a,b) = Q(c,d)$, meaning that the minimum is achieved twice. 
\end{proof}

\begin{rem}
 The previous lemma is similar to the four-point condition but not the same,
 as Lemma~\ref{L:double_minima} holds even when the dissimilarity map $D$ is not additive. 
\end{rem}

\begin{rem}
Note that if $n>4$, the minimum in $Q_n$ could be achieved in more than one entry too, but that happens in a very small dimensional space in $\R^{n\choose 2}$, thus it occurs in a measure zero set. Only in the case $n = 4$ it happens always.
\end{rem}

\subsection{Newick notation}\label{SS:Newick}
We represent trees with the Newick notation.
This is one of the most widely used notations in bioinformatics to encode information about the tree topology, branch distances, and vertex labels.
It consist of parentheses that represent tree data as textual strings (see~\cite{War17,fels04}). 
A pair of vertices enclosed within matching parentheses indicates they have a common ancestor. There are slightly different formats representing the Newick notation,
so we explain the one we use with an example.

\begin{figure}[htb]
 \centerline{
  \begin{picture}(100,40)(0,0)
   \put(0, 0){\includegraphics[height=30pt]{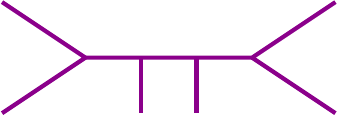}}
   \put(-6,28){$a$}      \put(-6,-4){$b$}
   \put(34,-7){$c$} \put(49,-7){$d$} 
   \put(89,28){$e$} \put(89,-4){$f$} 
  \end{picture}
 }
\caption{An unrooted binary tree $t$ with 6 taxa.} 
\label{F:NewickExample}
\end{figure}

Let $t$ be the tree in Figure~\ref{F:NewickExample}, with leaves labeled by the set $\{a,b,c,d,e,f\}$. Newick notation for $t$ can be written as $((c,(a,b)),(e,f), d)$, 
or $((d,(c, (a,b))), e, f)$, or in many other ways. This non-uniqueness of Newick strings can be inconvenient, as one must determine whether two strings represent
the same tree or not. 
We let the \demph{length} of a string in Newick notation to be the maximum number of parentheses of one orientation contained in the string (to the right or the left). For instance, the two strings above
have length 3.

We now explain the Newick notation in the NJ algorithm.
Let $[n]$ indicate the leaves of a tree.
The starting point is the start tree $t_n$, which is indicated
with the string $(1, 2,\ldots, n)$. 
Then, the algorithm selects
two vertices and we join them by enclosing them with a parenthesis, which
we append at the left to the remaining string.  
For instance, if $1$ and $2$ are the selected vertices, we would write $((1,2),3,\ldots,n)$ to indicate the tree obtained in the second step in the NJ algorithm. This is illustrated in the tree to the right of Figure~\ref{F:star&2ndstep}, by letting $a=1$ and $b=2$.
In the recursive step, NJ will take a string $s_k$ of length $k$ and write a string of length $k{-}1$ by joining two elements of $s_k$ with a new parenthesis attached to the left. In this way, in the string
$((c,(a,b)),(e,f), d)$ we know that the last step was to join $c$ with the pair $(a,b)$, but we do not know if the previous string was $((a,b),(e,f),c,d)$ or $((e,f),(a,b),c,d)$. A parenthesis in the Newick string indicates a node in the tree, so we could remove this ambiguity by labeling the nodes when we introduce them in the algorithm. 
We label these nodes with a circled number written to the left of a parenthesis, indicating the step when the node was created in the algorithm. 
For instance, we could write $(\circl{3}(c,\circl{1}(a,b)),\circl{2}(e,f), d)$ to indicate that the first step in the NJ algorithm was to join $(a,b)$, then $(e,f)$, and lastly $c$ with $(a,b)$.  We refer hereafter to this notation as \demph{ordered Newick notation}.
\newline

%
\begin{algorithm}[H]
\SetAlgoLined
\SetKwInOut{Input}{Input}\SetKwInOut{Output}{Output}
\Input{A dissimilarity map $D_n$ on the set $\{1, \ldots , n\}$.
}
\Output{An unrooted binary tree $T$ with leaf labels $\{1,  \ldots, n \}$ written in ordered Newick notation.}

{\bf Initialize} $k = n$, $r = 0$, and $S_k= (1,  \ldots,  k)$ representing the Newick format for 
$t_k$, 
the star tree on $k$ leaves with boughs $B_k$.

 \While{$k > 3$}{
 \begin{enumerate}
     \item Identify boughs $\{a, b\}$ of $t_k$ minimizing 
     $$Q_{k}(a,b) = (k-2)D_k(a, b) - \sum_{c \in B_k} D_k(a,c) - \sum_{c \in B_k} D_k(b,c).$$
     \item Define $S_{k-1}$ by appending \circl{$r$}$(a,b)$ to the left of $S_{k}$ after dropping $a$ and $b$.
     \item Construct $t_{k-1}$ from $t_{k}$ by 
     \begin{enumerate}
         \item Deleting the edges from both $a$ and $b$ to $\mathcal O$.
         \item Introducing a new vertex labeled $u$ adjacent to $\mathcal O$.
         \item Connecting $u$ to both $a$ and $b$. 
     \end{enumerate}
    \item Compute $D_{k-1}$ on the new set of boughs via the formula
     $$D_{k-1}(u,v) = \frac{1}{2}\left( D_{k}(a,v) + D_{k}(b,v) - D_{k}(A,B) \right)$$ for all remaining boughs in $t_{k}$
     \item Increase $r$ by one and decrease $k$ by 1.
 \end{enumerate}

\Return Label $\mathcal{O}$ in $t_3$ with zero and return $t_3   = T$
  
 }
 \caption{The Neighbor-Joining Algorithm}
\end{algorithm}

\section{Combinatorial description of NJ}\label{S:combinatoricsNJ}

To a given data matrix $D$, the NJ algorithm associates a binary tree with $n$ leaves, together with a tree metric. 
Without the distinction of ordered Newick notation, it can appear that the algorithm associates the same tree to different data.
For instance, let us consider the following matrices:
\begin{equation}\label{Eqn:DandD'Dissimilarity}
D = 
\begin{pmatrix}
0 & 3 & 5 & 4 & 7\\
3 & 0 & 10 & 3 & 7\\
5 & 10 & 0 & 6 & 5\\
4 & 3 & 6 & 0 & 2\\
7 & 7 & 5 & 2 & 0
\end{pmatrix},
\qquad 
D' = 
\begin{pmatrix}
 0 & 2 & 4 & 1 & 9 \\
 2 & 0 & 10 & 3 & 8 \\
 4 & 10 & 0 & 6 & 5 \\
 1 & 3 & 6 & 0 & 7 \\
 9 & 8 & 5 & 7 & 0
 \end{pmatrix}.
\end{equation}

The NJ algorithm associates the same unrooted binary tree to both of them.
However, their Newick notation is not the same.
For $D$, the corresponding Newick tree built by NJ is $((d,(a,b)), c, e)$
whereas for $D'$ we obtain $((d,(c,e)),a,b)$. We illustrate this in Figure~\ref{F:example1}.
\begin{figure}[htb]
 \centerline{
  \begin{picture}(100,40)(0,0)
   \put(0, 0){\includegraphics[height=30pt]{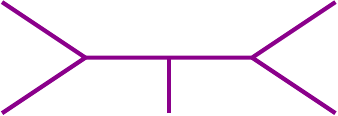}}
   \put(-6,28){$a$}      \put(-6,-4){$b$}
   \put(42,-8){$d$} 
   \put(89,28){$c$} \put(89,-4){$e$} 
  \end{picture}
 }
\caption{Tree corresponding to the Newick strings $((d,(a,b)), c, e)$ and $((d,(c,e)),a,b)$.} 
\label{F:example1}
\end{figure}
These two Newick strings encode one topological type of tree.  Yet they have a subtle difference 
if one needs to take into account the order in which taxa was agglomerated to interpret the output of the NJ algorithm, as one must do when establishing which polyhedral region of $\mathbb{R}^{n \choose 2}$ contains the dissimilarity matrix input that produces a given Newick string under NJ.  See Section \ref{S:volumes} for a discussion of the implications of locating the correspondence between these Newick strings and the geometry of NJ. 
For example, the Newick string $((d,(a,b)), c, e)$ corresponding to $D$ indicates
that $a, b$ were joined together and that $c$ and $e$ were never joined to anything, whereas the Newick $((d,(c,e)),a,b)$ for $D'$ indicates the opposite, that $a, b$ were never joined to anything but $c$ and $e$ were joined to each other.
%
%
To distinguish these data and relate them to ordered Newick notation, we endow binary trees with something we called \emph{agglomeration order} and we explain it next. 
 
\subsection{Binary trees with agglomeration order}\label{SS:agglomeration}

 The expected number of trees that arise as an output depends on the shape (topology) and the labels of the leaves. 
For unrooted trees with $n$ taxa, there are $(2(n {-} 2) {-}1)!!$ labeled binary trees.  Thus, for $n= 4$ there is only one tree shape and three ways to label the leaves.
However, there are two possible ways to write them in Newick notation.
For instance, we could have $((a,b),c,d)$ or $((c,d),a,b)$ to mean the same tree, but the NJ algorithm differentiates these two. Therefore, for $n=4$ 
there are 6 possible Newick strings but the NJ algorithm only returns 3 of them.
We summarize this information for the first five cases in the following table.
\begin{table}[htb]

\begin{tabular}{|c|c|c|c|c|}
\hline
Taxa & tree topologies & unrooted binary trees & trees from NJ & ordered Newick strings \\\hline 
4 & 1  & $3=3!!$ &3 & 6 \\\hline 
5 & 1  & $15=5!!$ & 30& 60 \\\hline 
6 & 2  & $105=7!!$ & 450 &900\\\hline 
7 & 2  & $945=9!!$ & 9,450 &18,900\\\hline 
8 & 4  & $10,395=11!!$ & 264,600 &529,200 \\\hline
\end{tabular}
\caption{Number of trees for small number of taxa.}
\label{Tb:counting trees}
\end{table}

We will give a combinatorial formula to compute the expected number of output trees from the algorithm. For this, we set the combinatorial definitions we will require.

\begin{defn}\label{dfn:agglomeration_order}
For a binary tree with $n$ leaves, an \demph{agglomeration order} means labeling the $n{-}2$ internal nodes with the set $\{\infty,1,2,\ldots,n{-}3\}$, such that
the labels of the internal nodes in every path from $\infty$ to a leaf form a decreasing sequence.
\end{defn}

We use circled numbers to indicate the assignment of the agglomeration order, and to simplify the schematics, we omit the label $\infty$, so it is easier to verify that all sequences of internal vertices starting there and terminating at a leaf are decreasing.
In Figure~\ref{F:AgglomOrder} we illustrate
an agglomeration order for the tree $t$ with 6 taxa
from Figure~\ref{F:NewickExample}. 
\begin{figure}[htb]
 \centerline{
  \begin{picture}(100,40)(0,0)
   \put(0, 0){\includegraphics[height=30pt]{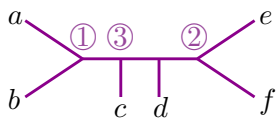}}
   \put(17,20){{\color{Purple}\circl{1}}}
   \put(31,20){{\color{Purple}\circl{3}}}
   \put(59,20){{\color{Purple}\circl{2}}}
   \put(-6,28){$a$}      \put(-6,-4){$b$}
   \put(34,-7){$c$} \put(49,-7){$d$} 
   \put(89,28){$e$} \put(89,-4){$f$} 
  \end{picture}
 }
\caption{An agglomeration order in an unrooted tree with 6 taxa.} 
\label{F:AgglomOrder}
\end{figure}
There, if we exchange  \circl{3} with \circl{1} we get a label of the internal nodes that is not an agglomeration order. 
In Newick notation, the tree in Figure~\ref{F:AgglomOrder} would be $(\circl{3}(c,\circl{1}(a,b)),\circl{2}(e,f), d)$.

\begin{rem}\label{rem:NJ2to1}
From Lemma~\ref{L:double_minima}, we see that the NJ algorithm associates two ordered trees to a single data matrix $D$, as it has to decide between two agglomeration orders in the last step.
\end{rem}

\begin{thm}\label{T:NJagglomeration}
The set of 
agglomeration orders on unrooted binary trees is in a 2-to-1 correspondence with the 
output space of the NJ algorithm. 
\end{thm}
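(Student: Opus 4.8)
The plan is to set up a map $\Phi$ from the output space of NJ to the set of agglomeration orders on unrooted binary trees, and show it is surjective with every fiber of size exactly two. The NJ algorithm, run on a generic $D_n$, produces a sequence of agglomeration events: at step $r$ (for $r = 0, 1, \ldots, n-4$) it creates a bouquet by joining two boughs, and the final step on $t_4$ picks a fourth bouquet. Record the tree $T$ together with the step index at which each internal node was created; by the construction in Section~\ref{SS:Newick} this is exactly the data of an ordered Newick string, and I would first verify that such a labeling is always an agglomeration order in the sense of Definition~\ref{dfn:agglomeration_order}. The key point for this verification: whenever a bouquet $u$ is created at step $r$ by joining boughs $a$ and $b$, any internal node lying strictly below $u$ on the path to a leaf was already a bough of $t_k$ before step $r$, hence was created at an earlier step $r' < r$; so labels strictly decrease along every path from the root node $\mathcal{O}$ (labeled $\infty$) to a leaf.

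Next I would prove surjectivity: given any unrooted binary tree $T$ with an agglomeration order, I want a generic $D_n$ whose NJ run realizes exactly that order (up to the last-step ambiguity of Lemma~\ref{L:double_minima}). The cleanest route is to reverse-engineer the algorithm: the agglomeration order prescribes, at each stage $k$, which pair of boughs must be agglomerated, namely the pair whose join is the internal node carrying the largest remaining label. So it suffices to exhibit edge weights on $T$ — equivalently a tree metric $D_n$ — for which the $Q_k$-criterion selects, at each step, the prescribed pair as the (generic, i.e. strict) minimizer. I would argue this is possible by an explicit or inductive weighting: e.g. give the pendant edges at a cherry that is supposed to be agglomerated first a very small weight relative to everything else, forcing $Q$ to be minimized there; after the reduction step~\eqref{Eq:ReductionStep} one is left with a tree metric on $t_{n-1}$ and its prescribed agglomeration order, so induction applies. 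One should be a little careful that the reduced metric stays non-negative and generic, which a suitable choice of scales guarantees.

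Finally I would count the fibers. By Remark~\ref{rem:NJ2to1}, which is an immediate consequence of Lemma~\ref{L:double_minima}, each data matrix $D$ yields exactly two ordered trees, differing only in the labels $\{1, \infty\}$ — wait, more precisely differing in which of the two minimizing pairs of $Q_4$ is taken as the final ($r = n-4$) agglomeration and which becomes the distinguished pair adjacent to $\mathcal{O}$. Concretely, on the final quartet $\{a,b,c,d\}$ with $Q(a,b) = Q(c,d)$ minimal, the two outputs are the agglomeration order that assigns the top label to the node joining $a,b$ and the one that assigns it to the node joining $c,d$; these are the two distinct agglomeration orders on the same underlying $T$ that agree on all lower labels. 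So the preimage of a point in the output space (a region of $\R^{n \choose 2}$) under $\Phi$ is a pair of agglomeration orders, and conversely the involution swapping the two top-level choices pairs up agglomeration orders into these fibers. Hence $\Phi$ is 2-to-1, and combined with surjectivity this gives the stated correspondence.

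I expect the main obstacle to be the surjectivity step — producing, for an arbitrary prescribed agglomeration order, an honest dissimilarity map (ideally a tree metric) whose NJ run strictly realizes it at every stage. The inductive weighting argument is intuitively clear, but making the $Q_k$-inequalities hold simultaneously requires choosing the scale separations carefully and checking that the reduction formula~\eqref{Eq:ReductionStep} does not destroy genericity or positivity; this is where the real work lies. The 2-to-1 bookkeeping and the ``labeling is a valid agglomeration order'' direction are comparatively routine once the definitions are unwound.
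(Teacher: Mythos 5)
Your argument follows essentially the same route as the paper's proof: label each internal node by the step at which NJ creates it (a valid agglomeration order since every new node is adjacent to $\mathcal{O}$ and hence combinatorially closer to it than previously created nodes on the same path to a leaf), observe that every agglomeration order is realizable, and obtain fibers of size exactly two from the forced tie in $Q_4$ (Lemma~\ref{L:double_minima} and Remark~\ref{rem:NJ2to1}). The only difference is degree of detail: the paper dispatches surjectivity with the single sentence ``as these steps are reversible, each agglomeration order can arise from the algorithm,'' whereas your inductive construction of a tree metric realizing a prescribed order makes that step explicit --- a welcome tightening, but not a different approach.
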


\begin{proof}
The NJ algorithm starts with the star tree $t_n$ consisting of $n$ leaves and just one internal node $\mathcal O$. From there, at each step, the algorithm applies a series of graph transformations to a given tree, preserving the number of leaves, while increasing the number of nodes by one. 
This new node is adjacent to $\mathcal O$. Thus, at each step the new node is closer (combinatorially, not metrically) to $\mathcal O$ than all other nodes in a path to a leaf.
Numbering each node with the moment they appear in the algorithm, starting with $\infty$ for the node $\mathcal O$, results in an agglomeration order. As these steps are reversible, each agglomeration order can arise from the algorithm, giving the 2-to-1 correspondence with the output space, together with Remark~\ref{rem:NJ2to1}.
\end{proof}

Therefore, understanding the NJ algorithm leads to understanding orders of agglomeration for binary trees. To simplify the rest of the exposition, we give the following definitions.

\begin{defn}\label{Dfn:AgglomeratedTree}
  We use the term \demph{agglomerated tree} to refer to  an unrooted binary tree endowed with an agglomeration order.  The number of agglomerated trees with $n$ leaves will be denoted by \demph{$\Phi(n)$}. 
\end{defn}

Computing the number  $\Phi(n)$ is important to understand more about the combinatorial complexity of the NJ algorithm. 
Due to Theorem~\ref{T:NJagglomeration} above, the number of trees output by the NJ algorithm is $\Phi(n)/2$.
The last column of Table~\ref{Tb:counting trees} is precisely the value of  $\Phi(n)$ for $n=4,\ldots, 8$. We tried to determine $\Phi(n)$ by estimating first the number of agglomeration orders, knowing that the number of unrooted binary trees is given by the combinatorial formula $(2n{-}5)!!$. 
For $n=4$ and $5$, there is only one tree topology, but the number of agglomeration orders they have is $2$ and $4$ respectively.
For these cases, it holds true that $\Phi(4)= 2\cdot 3!!$ and $\Phi(5)=4\cdot 5!!$. 
However, as we can see in Table~\ref{Tb:counting trees},  this is no longer the case for other values of $n$, as $\Phi(n)$ is not always divisible by $(2n{-5})!!$. 
Nonetheless, we were able to give a formula for the number $\Phi(n)$ using Motzkin paths. However, it remains open to understand more about the connection between unrooted binary trees and agglomeration orders.

\begin{problem}
Determine the number of agglomeration orders that can be assigned to a given unrooted binary tree.
\end{problem}

\subsection{Motzkin paths}

Motzkin paths are combinatorial structures appearing in many contexts. They are counted by Motzkin numbers, which are related to Catalan numbers~\cite{Aig98,Aig99,OVdJ15,MOPT10}.  Note that while Catalan numbers are known to count combinatorial objects referred to as \emph{planar rooted trees}~\cite{DSh02}, the trees in this paper are fundamentally different objects.

A \demph{Motzkin path} is an integer lattice path starting and ending in the horizontal axis without crossing it, consisting of up steps $\up{u}=\up{(1,1)}$, down steps $\dwn{d}=\dwn{(1,-1)}$, and horizontal steps $\hz{h}=\hz{(1,0)}$.
A Motzkin path with no horizontal steps is a \demph{Dyck path}. The number of Dyck paths from $(0,0)$ to $(2N,0)$ is given by the Catalan number $C_N$, whereas the number of Motzkin paths from $(0,0)$ to $(0,N)$,  is given by the Motzkin number $M_N$.

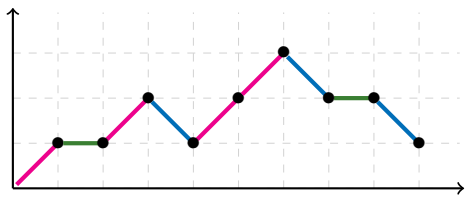
\begin{figure}[htb]
    \begin{tikzpicture}[thick, scale=0.6]
        \draw[help lines, color=gray!35, dashed] (0,0) grid (9.9,3.9);
        \draw[->,thick] (0,0)--(10,0);
        \draw[->,thick] (0,0)--(0,4);
        \draw[Magenta, ultra thick, shorten <=2pt] (0,0)--(1,1) node[black] {$\bullet$};
        \draw[Magenta, ultra thick, shorten <=2pt] (2,1)--(3,2) node[black] {$\bullet$};
        \draw[Magenta, ultra thick, shorten <=2pt] (4,1)--(5,2) node[black] {$\bullet$};
        \draw[Magenta, ultra thick,shorten <=2pt] (5,2)--(6,3) node[black] {$\bullet$};
        \draw[OliveGreen, ultra thick, shorten <=2pt] (1,1)--(2,1) node[black] {$\bullet$};
        \draw[OliveGreen, ultra thick, shorten <=2pt] (7,2)--(8,2) node[black] {$\bullet$};
        \draw[NavyBlue, ultra thick, shorten <=2pt] (3,2)--(4,1) node[black] {$\bullet$};
        \draw[NavyBlue, ultra thick, shorten <=2pt] (6,3)--(7,2) node[black] {$\bullet$};
        \draw[NavyBlue, ultra thick, shorten <=2pt] (8,2)--(9,1) node[black] {$\bullet$};
    \end{tikzpicture}
\caption{Motzkin path} 
\label{F:MotzkinPath}
\end{figure}

Our main result here is to give a bijection between Motzkin paths and agglomerated trees. This bijection allows the derivation of a formula to determine the number $\Phi(n)$ that counts 
the number of agglomerated trees.
The key is to focus on the recursive step of the algorithm.

\begin{rem}\label{Rem:AlphaSteps}
Let $\mathcal O$ be the unique node of the star tree $t_n$.
In the recursive step, the NJ algorithm takes a tree $t_k$ with 
$\ell$ stems and $r$ bouquets, such that $k=\ell+r$.
From there, it constructs the graph $t_{k-1}$ by adding an internal node 
in three possible ways:
\begin{itemize}
    \item It merges two stems. We call this step $\alf$. 
    \item  It merges two bouquets. We call this step $\bet$.
    \item It merges a stem to a bouquet. We call this step $\gam$.
\end{itemize}
We illustrate these three steps in Figure~\ref{F:AgglomerationSteps} below.
\begin{figure}[htb]
  \includegraphics[height=0.36\textwidth]{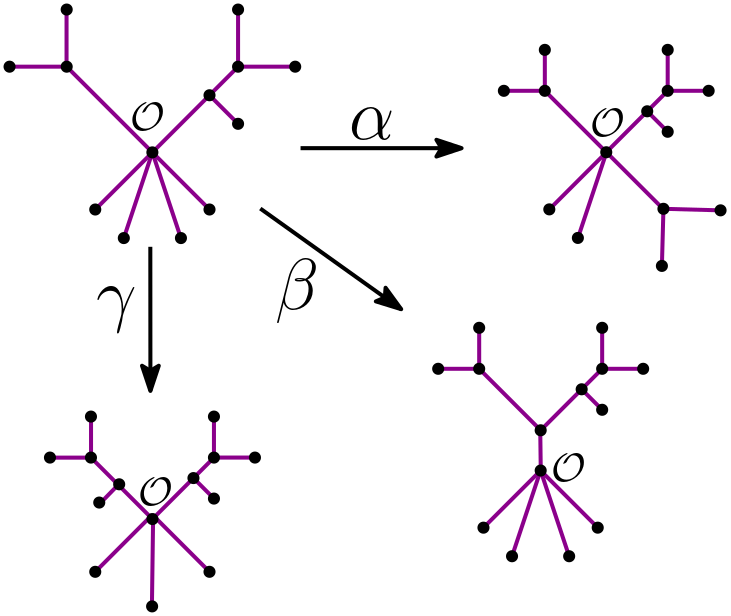}
\caption{The tree agglomeration steps.} 
\label{F:AgglomerationSteps}
\end{figure}
\end{rem}

From this remark, we see that
we can summarize a given tree $t$ with a distinguished node $\mathcal O$
by its \demph{bough vector $(\ell, r)$}, 
where $\ell$ and $r$ are the number of stems and bouquets in $t$ respectively.
For instance, the tree $t$ in Figure~\ref{F:TreeNotBinary} has three nodes and nine leaves, but just four stems and two bouquets. Thus, the bough vector $(4,2)$
summarizes $t$. 
\begin{figure}[htb]
  \includegraphics[height=0.16\textwidth]{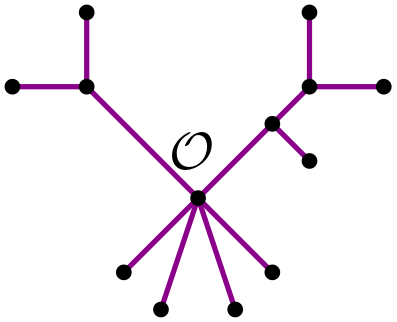}
\caption{A tree $t$ with bough vector $(4,2)$.} 
\label{F:TreeNotBinary}
\end{figure}
The bough vector for the star with $n$ leaves is $(n,0)$.
Let $t_k$ be a tree from the NJ algorithm, and let $(\ell, r)$ be its bough vector, so $k=\ell + r$. Note that the tree obtained from $t_k$ after an $\alf$ step is summarized by the bough vector $(l, r) + \up{(-2,1)}$.
Similarly, after a step $\bet$ or $\gam$, the bough vector of the tree is 
$(l, r) + \dwn{(0,-1)}$, or $(l,r) + \hz{(-1, 0)}$ respectively.
Note that the NJ algorithm ends with a tree $T_3$ consisting only of three boughs.
Thus, for $n\geq 4$, the possible bough vectors for the last step are $(2,1), (1,2)$, or $(0,3)$.
Note that the first step of the algorithm is forced to be always an $\alf$ step, thus we can omit it and analyze the rest of the steps, starting at $(n{-}2,1)$.

\begin{defn}
  For $n\geq 4$ we define an \demph{NJ path} of length $n{-}4$ as an integer lattice path consisting of steps   $\alf =  \up{(-2,1)}$,  $\bet = \dwn{(0,-1)}$, and $\gam= \hz{(-1, 0)}$, starting at $(n{-}2,1)$ and ending at one of $(2,1), (1,2)$, or $(0,3)$, without crossing the horizontal axis $y=1$.
\end{defn}

\begin{thm}\label{Thm:NJ-MotzkinPaths}
  For every $n\geq 4$, there is a bijection between NJ paths of length $n{-}4$ and Motzkin paths of length $n{-}4$ from $(1,1)$ to either $(n{-}3,1), (n{-}3,2)$, or $(n{-}3,3)$.
\end{thm}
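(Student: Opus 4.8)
The plan is to exhibit an explicit map between the two families of lattice paths that is a bijection step-by-step, and then check that the endpoint constraints and the "non-crossing" constraints match up. An NJ path lives in the $(\ell,r)$-plane and uses steps $\alf=(-2,1)$, $\bet=(0,-1)$, $\gam=(-1,0)$; a Motzkin path uses $\up{u}=(1,1)$, $\dwn{d}=(1,-1)$, $\hz{h}=(1,0)$. The natural correspondence is $\alf \leftrightarrow \up{u}$, $\bet \leftrightarrow \dwn{d}$, $\gam \leftrightarrow \hz{h}$, since in both cases there are two steps that change a ``height'' coordinate by $\pm 1$ and one that leaves it fixed. So first I would record that both path families have exactly $n-4$ steps (an NJ path has length $n-4$ by definition, starting at $(n-2,1)$), and that under this letter-for-letter correspondence the sequence of step types is in obvious bijection; the content of the theorem is entirely in checking that the geometric side conditions (start point, allowed endpoints, and the no-crossing restriction) transform correctly.

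Next I would set up the change of coordinates explicitly. Given an NJ path with vertices $(\ell_0,r_0),(\ell_1,r_1),\dots,(\ell_{n-4},r_{n-4})$ where $(\ell_0,r_0)=(n-2,1)$, I would define the corresponding Motzkin-path vertices by $(x_i,y_i) := (i+1,\; r_i)$. The first claim is that $x_0 = 1$ and each step increments $x$ by exactly $1$: this is immediate since each of $\alf,\bet,\gam$ is one step and $i$ increments by $1$. The second claim is that the $y$-coordinate moves by $+1$ on an $\alf$ step, $-1$ on a $\bet$ step, $0$ on a $\gam$ step — exactly matching $\up{u},\dwn{d},\hz{h}$ — which is read off directly from $\alf=\up{(-2,1)}$, $\bet=\dwn{(0,-1)}$, $\gam=\hz{(-1,0)}$. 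Hence $y_0 = r_0 = 1$, the path ends at $x_{n-4} = n-3$, and the no-crossing condition ``never cross $y=1$'' for the NJ path becomes ``$y_i \geq 1$ for all $i$'', which is precisely the Motzkin-style non-negativity once we note these paths live at height $\geq 1$ throughout. I would also verify that the $\ell$-coordinate is recoverable and stays nonnegative, so that this is genuinely a bijection onto NJ paths and not merely an injection: from the step multiset we get $\ell_i$ as $n-2$ minus (twice the number of $\alf$ steps so far) minus (the number of $\gam$ steps so far), and I would check $\ell_i \geq 0$ is automatically implied by the endpoint constraints (the path must terminate at $\ell \in \{0,1,2\}$ and $\ell$ only decreases, so nonnegativity of $\ell$ along the way is equivalent to nonnegativity at the end).

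Then I would pin down the endpoints. An NJ path ends at one of $(2,1),(1,2),(0,3)$; under $(x,y)=(i+1,r)$ these become $(n-3,1),(n-3,2),(n-3,3)$ respectively, which is exactly the stated target set for the Motzkin paths. Conversely, I would argue that any Motzkin path from $(1,1)$ to $(n-3,y)$ with $y\in\{1,2,3\}$ pulls back to a valid NJ path: translating letters back gives a lattice walk with the right start $(n-2,1)$ and right step types, the $r$-coordinate stays $\geq 1$ because $y\geq 1$, and the $\ell$-coordinate lands in $\{0,1,2\}$ and hence stays $\geq 0$ throughout, so all NJ-path axioms hold. This establishes the two maps are mutually inverse.

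I expect the main subtlety — not really an obstacle, but the point that needs care — to be the bookkeeping on the $\ell$-coordinate: the Motzkin path only tracks $r$ (equivalently $y$), so one must confirm that nothing about the $\ell$-coordinate can be violated in a way the Motzkin side fails to see. The key observation making this work is that $\ell$ is a monotonically non-increasing function of the step index (every step $\alf,\bet,\gam$ changes $\ell$ by $-2,0,-1$ respectively, never positive) and that $r$ together with the total number of steps taken determines $\ell$ exactly, so the endpoint constraint on $\ell$ (value in $\{0,1,2\}$ after $n-4$ steps) already forces $\ell\geq 0$ along the whole path. Once that monotonicity remark is in place, the bijection is a routine letter-substitution plus the affine reparametrization $x=i+1$, and I would present it exactly that way.
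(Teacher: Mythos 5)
Your proof is correct and is essentially the paper's argument: the affine reparametrization $(x,y)=(i+1,r)$ is exactly the paper's linear map $(\ell,r)\mapsto(-\ell-r,r)$ (up to translation by $n$), since $\ell+r$ drops by exactly $1$ at each step. You simply spell out the endpoint, non-crossing, and $\ell\geq 0$ checks that the paper leaves implicit.
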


\begin{proof}
The matrix
$\big(\begin{smallmatrix}
  -1 & -1\\
  0 & 1
\end{smallmatrix}\big)$
sends steps $\{\alf, \bet, \gam\}$ into $\{\up{u}, \dwn{d}, \hz{h}\}$ respectively, giving the  bijection. 
\end{proof}

Motzkin paths starting at the origin and ending at $(\ell,r)$ are called \demph{partial Motzkin paths}. Thus, after translating $(1,1) \to (0,0)$, we could write Theorem~\ref{Thm:NJ-MotzkinPaths} in terms of partial Motzkin paths ending at $(n{-}4,0), (n{-}4,1)$, or $(n{-}4,2)$.
%
%
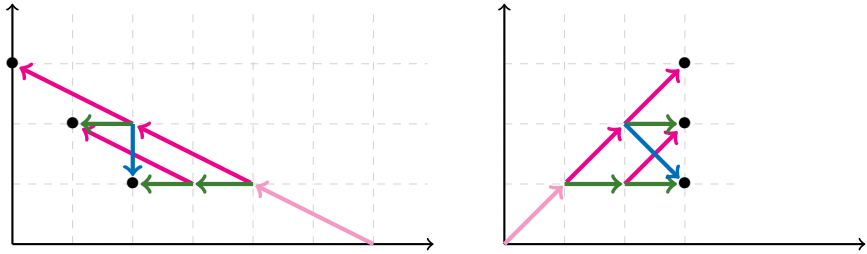
\begin{figure}[htb!]
    \begin{tikzpicture}[thick, scale=0.8]
        \draw[help lines, color=gray!35, dashed] (0,0) grid (6.9,3.9);
        \draw[->,thick] (0,0)--(7,0);
        \draw[->,thick] (0,0)--(0,4);
        \node (A) at (2,1) {$\bullet$};
        \node (B) at (1,2) {$\bullet$};
        \node (C) at (0,3) {$\bullet$};
        
        \draw[->,Magenta!50, ultra thick,shorten >=1pt] (6,0)--(4,1);
        \draw[->,Magenta, ultra thick,shorten >=2pt,shorten <=1pt] (4,1)--(2,2);
        \draw[->,Magenta, ultra thick,shorten >=3pt] (2,2)--(0,3);
        \draw[->,Magenta, ultra thick,shorten >=4pt] (3,1)--(1,2);
        \draw[->, OliveGreen, ultra thick,shorten >=1pt] (4,1)--(3,1);
        \draw[->, OliveGreen, ultra thick,shorten >=3pt] (3,1)--(2,1);
        \draw[->, OliveGreen, ultra thick,shorten >=3pt] (2,2)--(1,2);
        \draw[->, NavyBlue, ultra thick,shorten >=3pt] (2,2)--(2,1);
    \end{tikzpicture}
    \qquad
    \begin{tikzpicture}[thick, scale=0.8]
        \draw[help lines, color=gray!35, dashed] (0,0) grid (3.9,3.9);
        \draw[->,thick] (0,0)--(6,0);
        \draw[->,thick] (0,0)--(0,4);
        \node (A) at (3,1) {$\bullet$};
        \node (B) at (3,2) {$\bullet$};
        \node (C) at (3,3) {$\bullet$};
        
        \draw[->,Magenta!50, ultra thick,shorten >=1pt] (0,0)--(1,1);
        \draw[->,Magenta, ultra thick,shorten >=2pt,shorten <=1pt] (1,1)--(2,2);
        \draw[->,Magenta, ultra thick,shorten >=3pt] (2,2)--(3,3);
         \draw[->,Magenta, ultra thick,shorten >=4pt] (2,1)--(3,2);
         \draw[->, OliveGreen, ultra thick,shorten >=1pt] (1,1)--(2,1);
         \draw[->, OliveGreen, ultra thick,shorten >=3pt] (2,1)--(3,1);
         \draw[->, OliveGreen, ultra thick,shorten >=3pt] (2,2)--(3,2);
         \draw[->, NavyBlue, ultra thick,shorten >=3pt] (2,2)--(3,1);
     \end{tikzpicture}
    \caption{All NJ paths (left) and partial Motzkin paths (right) for 6 taxa.}
    \label{F:NJpaths6}
\end{figure}

For 6 taxa, all possible NJ paths and their corresponding Motzkin paths are depicted in Figure~\ref{F:NJpaths6}. NJ paths there start at $(6,0)$ corresponding to the start tree $t_6$, and the first step is always an $\alf$ step. From there, one chooses from the three steps
$\{\alf, \bet, \gam\}$ consecutively until reaching one of the points $(0,3), (1,2)$, or $(2,1)$. In this case, there are only 5 paths, each corresponding to an agglomeration order 
in Figure~\ref{F:AllOrders6}. 
\begin{figure}[htb]
\subfloat{
 \begin{picture}(80,55)(0,0)
   \put(0, 0){\includegraphics[height=60pt]{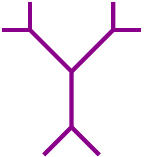}}
   \put(41,38){{\color{Purple}\circl{2}}}
   \put(3,38){{\color{Purple}\circl{1}}}
   \put(15,11){{\color{Purple}\circl{3}}}
  \end{picture}
} 
\subfloat{\begin{picture}(80,40)(0,0)
   \put(0, 0){\includegraphics[height=60pt]{images/nonCaterpillar6.pdf}}
   \put(41,38){{\color{Purple}\circl{2}}}
   \put(3,38){{\color{Purple}\circl{1}}}
   \put(15,27){{\color{Purple}\circl{3}}}
  \end{picture}
}
\\
\subfloat{
  \begin{picture}(100,40)(0,0)
   \put(0, 0){\includegraphics[height=30pt]{images/unrootedCaterpillar6.pdf}}
   \put(17,20){{\color{Purple}\circl{1}}}
   \put(31,20){{\color{Purple}\circl{2}}}
   \put(45,20){{\color{Purple}\circl{3}}}
  \end{picture}
} 
\subfloat{
  \begin{picture}(100,40)(0,0)
   \put(0, 0){\includegraphics[height=30pt]{images/unrootedCaterpillar6.pdf}}
   \put(17,20){{\color{Purple}\circl{1}}}
   \put(31,20){{\color{Purple}\circl{2}}}
   \put(59,20){{\color{Purple}\circl{3}}}
  \end{picture}
} 
\subfloat{
  \begin{picture}(100,40)(0,0)
   \put(0, 0){\includegraphics[height=30pt]{images/unrootedCaterpillar6.pdf}}
   \put(17,20){{\color{Purple}\circl{1}}}
   \put(31,20){{\color{Purple}\circl{3}}}
   \put(59,20){{\color{Purple}\circl{2}}}
  \end{picture}
}
\caption{Some agglomeration orders corresponding to the 5 Motzkin paths in Figure~\ref{F:NJpaths6}.} 
\label{F:AllOrders6}
\end{figure}
For instance, the path formed by the sequence $\alf\alf\gam$ (read from left to right) is in correspondence to the agglomeration order of the tree in Figure~\ref{F:AgglomOrder}, denoted in Newick format by $(\hz{\circl{3}}(c,\up{\circl{1}}(a,b)),\up{\circl{2}}(e,f),d)$. 
Note that the sequence $\alf\alf\gam$ is in correspondence with more than one agglomeration order. For instance, it is also in correspondence with the tree $(\hz{\circl{3}}(c,\up{\circl{1}}(e,f)),\up{\circl{2}}(a,b),d)$. 


\subsection{The number of agglomerated trees}


The results of the previous discussion reduce the counting of the number of trees obtained from the NJ algorithm to enumerating some partial Motzkin paths, which are counted by Motzkin numbers.
Hence, we let $M_k$ be the number of partial Motzkin paths of length $k$. More specifically, we let $M_{k,j}$ be the number of partial Motzkin paths on length $k$ that end at level $j$. 
It is known (see~\cite{OVdJ15}) that the Motzkin numbers $M_k$ 
satisfy the following formulas involving Catalan numbers:
\begin{align}\label{Eq:MotzkinCatalan}
    M_k = \sum_{i} {k\choose 2i}C_i, \qquad\mbox{and}\qquad C_{k+1} = \sum_i {k\choose i}M_i.
\end{align}
While the numbers $M_{k,j}$ satisfy the following formula~\cite[Theorem 10.8.1]{Bon15}
\begin{equation}\label{E:Motzkin_nk}
M_{k,j} = \sum_{i=0}^k {k \choose i}\left[{k{-}i \choose (k{+}j{-}i)/2} - {k{-}i \choose (k{+}j{-}i{+}2)/2}\right],
\end{equation}
where, by convention, a binomial coefficient is 0 if its bottom parameter is not an integer.
These numbers $M_{k,j}$ construct the \demph{Motzkin triangle} that enumerates all partial Motzkin paths~\cite{Lan03}, which is shown in Figure~\ref{F:MotzkinTriangle}. The triangle is defined recursively by
\begin{equation}\label{Eq:MotzkinTriangleDefn}
    M_{0,0} = 1, \qquad M_{k+1,j} = M_{k,j-1} + M_{k,j} + M_{k,j+1}, \mbox{ for }\ k\geq 1.
\end{equation}
%
\begin{figure}[h!tb]
\begin{tikzpicture}[scale=0.8]
    \node  [circle,draw] (00) at (0,0) {1};
    \node  [circle,draw] (10) at (1.5,0) {1};
    \node  [circle,draw] (20) at (3,0) {2};
    \node  [circle,draw] (30) at (4.5,0) {4};
    \node  [circle,draw] (40) at (6,0) {9};
    \node  [circle,draw] (50) at (7.5,0) {21};
    \node  [circle,draw] (11) at (1.5,1.5) {1};
    \node  [circle,draw] (21) at (3,1.5) {2};
    \node  [circle,draw] (31) at (4.5,1.5) {5};
    \node  [circle,draw] (41) at (6,1.5) {12};
    \node  [circle,draw] (51) at (7.5,1.5) {30};
    \node  [circle,draw] (22) at (3,3) {1};
    \node  [circle,draw] (32) at (4.5,3) {3};
    \node  [circle,draw] (42) at (6,3) {9};
    \node  [circle,draw] (52) at (7.5,3) {25};
    \node  [circle,draw] (33) at (4.5,4.5) {1};
    \node  [circle,draw] (43) at (6,4.5) {4};
    \node  [circle,draw] (53) at (7.5,4.5) {14};
    \node  [circle,draw] (44) at (6,6) {1};
    \node  [circle,draw] (54) at (7.5,6) {5};
    \node  [circle,draw] (55) at (7.5,7.5) {1};
    \draw [->, OliveGreen,  thick] (00) -- (10);
    \draw [->, OliveGreen,  thick] (10) -- (20);
    \draw [->, OliveGreen,  thick] (20) -- (30);
    \draw [->, OliveGreen,  thick] (30) -- (40);
    \draw [->, OliveGreen,  thick] (40) -- (50);
    \draw [->, OliveGreen,  thick] (11) -- (21);
    \draw [->, OliveGreen,  thick] (21) -- (31);
    \draw [->, OliveGreen,  thick] (31) -- (41);
    \draw [->, OliveGreen,  thick] (41) -- (51);
    \draw [->, OliveGreen,  thick] (22) -- (32);
    \draw [->, OliveGreen,  thick] (32) -- (42);
    \draw [->, OliveGreen,  thick] (42) -- (52);
    \draw [->, OliveGreen,  thick] (33) -- (43);
    \draw [->, OliveGreen,  thick] (43) -- (53);
    \draw [->, OliveGreen,  thick] (44) -- (54);
    \draw [->, Magenta,  thick] (00) -- (11);
    \draw [->, Magenta,  thick] (10) -- (21);
    \draw [->, Magenta,  thick] (20) -- (31);
    \draw [->, Magenta,  thick] (30) -- (41);
    \draw [->, Magenta,  thick] (40) -- (51);
    \draw [->, Magenta,  thick] (11) -- (22);
    \draw [->, Magenta,  thick] (21) -- (32);
    \draw [->, Magenta,  thick] (31) -- (42);
    \draw [->, Magenta,  thick] (41) -- (52);
    \draw [->, Magenta,  thick] (22) -- (33);
    \draw [->, Magenta,  thick] (32) -- (43);
    \draw [->, Magenta,  thick] (42) -- (53);
    \draw [->, Magenta,  thick] (33) -- (44);
    \draw [->, Magenta,  thick] (43) -- (54);
    \draw [->, Magenta,  thick] (44) -- (55);
    \draw [->, NavyBlue,  thick] (11) -- (20);
    \draw [->, NavyBlue,  thick] (21) -- (30);
    \draw [->, NavyBlue,  thick] (31) -- (40);
    \draw [->, NavyBlue,  thick] (41) -- (50);
    \draw [->, NavyBlue,  thick] (22) -- (31);
    \draw [->, NavyBlue,  thick] (32) -- (41);
    \draw [->, NavyBlue,  thick] (42) -- (51);
    \draw [->, NavyBlue,  thick] (33) -- (42);
    \draw [->, NavyBlue,  thick] (43) -- (52);
    \draw [->, NavyBlue,  thick] (44) -- (53);
    \begin{pgfonlayer}{background}
        \draw[rounded corners=2em,line width=3em, red!75!gray!15,cap=round]
                (11.center) -- (21.center) -- (31.center)--(41.center)--(51.center);
    \end{pgfonlayer}
\end{tikzpicture}
\caption{Motzkin triangle with the second row highlighted}
\label{F:MotzkinTriangle}
\end{figure}
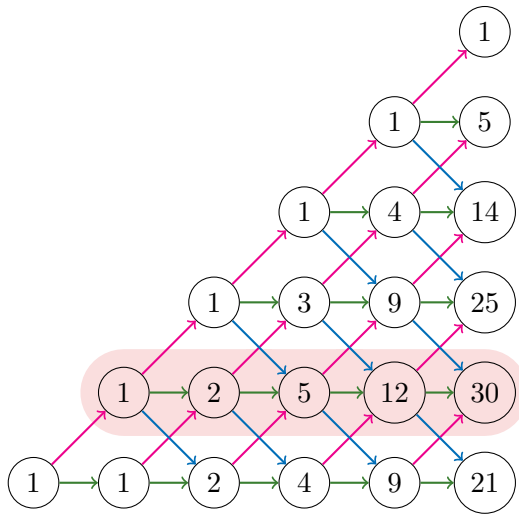
The numbers in the triangle form the sequence A026300 in~\cite{OEIS}, and the first and second rows (from bottom to top) are the sequences A001006, A002026 respectively. 
Notice in Figure~\ref{F:NJpaths6} that for all NJ paths, 
there is only one way to reach the point $(0,2)$ from the ending points $(0,3), (1,2)$, and $(2,1)$. This holds in general due to the recursion~\eqref{Eq:MotzkinTriangleDefn}. Thus, counting the number of NJ paths that end in one of these three points is equivalent to counting all NJ paths ending at $(0,2)$, or equivalently, all partial Motzkin paths ending at $(n{-}3, 1)$.  
In this way, we conclude that the number NJ paths is given by the Motzkin number $M_{n-3,1}$,
and equation~\eqref{E:Motzkin_nk} gives a formula for them.
We summarize these observations in the following theorem. 

\begin{thm}
The number 
of NJ paths for $n$ taxa
equals the Motzkin number $M_{n-3,1}$. Thus, it can be written as
\[
   \sum_{i=0}^{n-3} {n{-}3\choose i}\left[ 
   {n-i-3 \choose (n{-}i{-}2)/2}
   - {n-i-3 \choose (n{-}i)/2}
   \right] ,
\]
where, by convention, a binomial coefficient is 0 if its bottom parameter is not an integer, or if it is larger than the top parameter.
\end{thm}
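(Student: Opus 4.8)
The plan is to read the statement off Theorem~\ref{Thm:NJ-MotzkinPaths} together with the recursion~\eqref{Eq:MotzkinTriangleDefn} for the Motzkin triangle, and then substitute into the closed form~\eqref{E:Motzkin_nk}. First I would invoke Theorem~\ref{Thm:NJ-MotzkinPaths} in the partial-Motzkin reformulation stated just after its proof: the linear map $\big(\begin{smallmatrix} -1 & -1 \\ 0 & 1 \end{smallmatrix}\big)$, followed by the appropriate translation, carries NJ paths of length $n{-}4$ bijectively onto partial Motzkin paths of length $n{-}4$ whose endpoint lies at level $0$, $1$, or $2$ --- these being the images of the three admissible terminal bough vectors $(2,1)$, $(1,2)$, $(0,3)$ --- with the no-crossing constraint $y=1$ on NJ paths matching the defining no-crossing constraint for Motzkin paths. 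Hence the number of NJ paths for $n$ taxa is $M_{n-4,0} + M_{n-4,1} + M_{n-4,2}$.

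Next I would collapse that three-term sum into a single entry of the triangle. The recursion~\eqref{Eq:MotzkinTriangleDefn}, applied with $(k{+}1,j)=(n{-}3,1)$, gives
\[
M_{n-3,1} = M_{n-4,0} + M_{n-4,1} + M_{n-4,2},
\]
since no out-of-range term $M_{n-4,-1}$ appears and the base case $n=4$ reduces to the identity $M_{1,1}=M_{0,0}=1$. This is precisely the ``only one way to reach $(0,2)$'' remark recorded just before the statement, now phrased as an identity in the Motzkin triangle. Combining with the previous paragraph, the number of NJ paths for $n$ taxa equals $M_{n-3,1}$.

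For the explicit formula I would set $k = n{-}3$ and $j = 1$ in~\eqref{E:Motzkin_nk}, obtaining
\[
M_{n-3,1} = \sum_{i=0}^{n-3} \binom{n-3}{i}\left[\binom{n-i-3}{(n-i-2)/2} - \binom{n-i-3}{(n-i)/2}\right],
\]
which is the claimed expression; here a binomial coefficient is $0$ when its lower argument is not a non-negative integer, and the extra clause ``or larger than the top parameter'' is automatic (as $\binom{a}{b}=0$ for integers $0\le a<b$) but worth spelling out because small $i$ can create exactly such terms.

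I do not expect a substantial obstacle: the proof is essentially bookkeeping on top of Theorem~\ref{Thm:NJ-MotzkinPaths}. The one place that warrants care is the passage from the three terminal bough vectors to a single row of the Motzkin triangle --- one must confirm that the affine change of coordinates in Theorem~\ref{Thm:NJ-MotzkinPaths} sends $(2,1)$, $(1,2)$, $(0,3)$ to three \emph{consecutive} levels and that it respects the no-crossing constraints, so that the three counts genuinely are $M_{n-4,0}$, $M_{n-4,1}$, $M_{n-4,2}$ with no overcounting or omission. Carefully tracking that coordinate change (it is the map of Theorem~\ref{Thm:NJ-MotzkinPaths} composed with the translation $(1,1)\mapsto(0,0)$) resolves the point.
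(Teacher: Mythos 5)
Your proposal is correct and matches the paper's argument: both reduce the count to $M_{n-4,0}+M_{n-4,1}+M_{n-4,2}$ via Theorem~\ref{Thm:NJ-MotzkinPaths}, collapse this to $M_{n-3,1}$ using the triangle recursion~\eqref{Eq:MotzkinTriangleDefn} (the paper phrases this as "only one way to reach $(0,2)$" from the three terminal points), and then substitute $k=n-3$, $j=1$ into~\eqref{E:Motzkin_nk}. No substantive difference.
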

%
In order to find a formula for $\Phi(n)$, the output size of the NJ algorithm, we consider \demph{weighted partial Motzkin paths} which are partial Motzkin paths with weight assignments of non-negative numbers $\{a_{k,j},\ b_{k,j},\ c_{k,j}\}$ to the steps $\{ \up{u}, \dwn{d}, \hz{h} \}$.
We interpret the weights as the multiplicity of the arrow, or equivalently, as the number of arrows in the given direction. In Figure~\ref{F:MotzkinTriangleMultipl}, we show the weight assignment to the arrows of the Motzkin triangle.
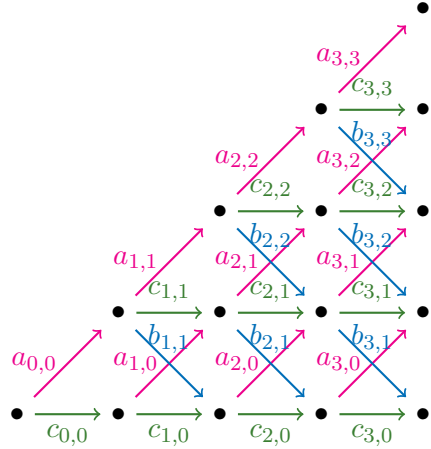
\begin{figure}[h!tb]
\begin{tikzpicture}[thick, scale=0.9]
    \node  (00) at (0,0) {$\bullet$};
    \node  (10) at (1.5,0) {$\bullet$};
    \node  (20) at (3,0) {$\bullet$};
    \node  (30) at (4.5,0) {$\bullet$};
    \node  (40) at (6,0) {$\bullet$};
    \node  (11) at (1.5,1.5) {$\bullet$};
    \node  (21) at (3,1.5) {$\bullet$};
    \node  (31) at (4.5,1.5) {$\bullet$};
    \node  (41) at (6,1.5) {$\bullet$};
    \node  (22) at (3,3) {$\bullet$};
    \node  (32) at (4.5,3) {$\bullet$};
    \node  (42) at (6,3) {$\bullet$};
    \node  (33) at (4.5,4.5) {$\bullet$};
    \node  (43) at (6,4.5) {$\bullet$};
    \node  (44) at (6,6) {$\bullet$};
    \draw [->, OliveGreen,  thick] (00) -- (10) node[midway, anchor=north]{$\hz{c_{0,0}}$};
    \draw [->, OliveGreen,  thick] (10) -- (20) node[midway, anchor=north]{$\hz{c_{1,0}}$};
    \draw [->, OliveGreen,  thick] (20) -- (30) node[midway, anchor=north]{$\hz{c_{2,0}}$};
    \draw [->, OliveGreen,  thick] (30) -- (40) node[midway, anchor=north]{$\hz{c_{3,0}}$};
    \draw [->, OliveGreen,  thick] (11) -- (21) node[midway, anchor=south]{$\hz{c_{1,1}}$};
    \draw [->, OliveGreen,  thick] (21) -- (31) node[midway, anchor=south]{$\hz{c_{2,1}}$};
    \draw [->, OliveGreen,  thick] (31) -- (41) node[midway, anchor=south]{$\hz{c_{3,1}}$};
    \draw [->, OliveGreen,  thick] (22) -- (32) node[midway, anchor=south]{$\hz{c_{2,2}}$};
    \draw [->, OliveGreen,  thick] (32) -- (42) node[midway, anchor=south]{$\hz{c_{3,2}}$};
    \draw [->, OliveGreen,  thick] (33) -- (43) node[midway, anchor=south]{$\hz{c_{3,3}}$};
    \draw [->, Magenta,  thick] (00) -- (11) node[midway, anchor=east]{$\up{a_{0,0}}$};
    \draw [->, Magenta,  thick] (10) -- (21) node[midway, anchor=east]{$\up{a_{1,0}}$};
    \draw [->, Magenta,  thick] (20) -- (31) node[midway, anchor=east]{$\up{a_{2,0}}$};
    \draw [->, Magenta,  thick] (30) -- (41) node[midway, anchor=east]{$\up{a_{3,0}}$};
    \draw [->, Magenta,  thick] (11) -- (22) node[midway, anchor=east]{$\up{a_{1,1}}$};
    \draw [->, Magenta,  thick] (21) -- (32) node[midway, anchor=east]{$\up{a_{2,1}}$};
    \draw [->, Magenta,  thick] (31) -- (42) node[midway, anchor=east]{$\up{a_{3,1}}$};
    \draw [->, Magenta,  thick] (22) -- (33) node[midway, anchor=east]{$\up{a_{2,2}}$};
    \draw [->, Magenta,  thick] (32) -- (43) node[midway, anchor=east]{$\up{a_{3,2}}$};
    \draw [->, Magenta,  thick] (33) -- (44) node[midway, anchor=east]{$\up{a_{3,3}}$};
    \draw [->, NavyBlue,  thick] (11) -- (20) node[midway, anchor=south]{$\dwn{b_{1,1}}$};
    \draw [->, NavyBlue,  thick] (21) -- (30) node[midway, anchor=south]{$\dwn{b_{2,1}}$};
    \draw [->, NavyBlue,  thick] (31) -- (40) node[midway, anchor=south]{$\dwn{b_{3,1}}$};
    \draw [->, NavyBlue,  thick] (22) -- (31) node[midway, anchor=south]{$\dwn{b_{2,2}}$};
    \draw [->, NavyBlue,  thick] (32) -- (41) node[midway, anchor=south]{$\dwn{b_{3,2}}$};
    \draw [->, NavyBlue,  thick] (33) -- (42) node[midway, anchor=south]{$\dwn{b_{3,3}}$};
    \end{tikzpicture}
    \caption{Motzkin triangle with multiplicities}
    \label{F:MotzkinTriangleMultipl}
\end{figure}
Triangles of this kind are called \demph{Motzkin triangles with multiplicities}~\cite{Lan03}. 
They generalize the Motzkin triangle from Figure~\ref{F:MotzkinTriangle},
as this is the case $a_{k,j} = b_{k,j} = c_{k,j} = 1$ for all $k,j\geq 0$.
The recursion in equation~\eqref{Eq:MotzkinTriangleDefn} generalizes to the following recursion for Motzkin triangles with multiplicities
\begin{equation}\label{Eq:WeightedMotzkinTriangleDefn}
    M_{k+1,j} = a_{k,j-1} M_{k,j-1} + c_{k,j} M_{k,j} + b_{k,j+1} M_{k,j+1}, \mbox{ for }\ k,j\geq 0.
\end{equation}

Let $t_k$ be a tree in the NJ algorithm with bough vector $(\ell,r)$, such that $k=\ell+r$.
If the next step in the NJ algorithm is a step $\alf$, one needs to choose two of the $\ell$ stems to join. There are
${\ell\choose 2}$ ways to do this. 
In a similar way, there are ${r \choose 2}$ choices for a $\bet$ step, and ${\ell \choose 1}{r \choose 1}$ for a $\gam$ step. We can use these as weights for computing the number of agglomerated trees after each step.
Thus, 
translating NJ paths to partial Motzkin paths starting at (1,1) 
by Theorem~\ref{Thm:NJ-MotzkinPaths},
and letting $s = n{-}1{-}k$, we need to assign weights in the following way:
\begin{itemize}
    \item $\up{a_{s,j}} = {{n-s-j-2 \choose 2}}$, 
    \item  $\dwn{b_{s,j}} = {{j+1 \choose 2}}$, 
    \item $\hz{c_{s,j}} = {{n-s-j-2\choose 1}{j+1\choose 1}}$,
\end{itemize}
for $0\leq s \leq n-4$ and $0\leq j\leq s$, or zero otherwise.

\begin{lem}\label{Lem:sumcoeffs}
 For $n\geq 4$ and $s \leq n-3$, the weights $a_{s,j}, b_{s,j}, c_{s,j}$ satisfy
 \begin{equation}\label{eq:sumcoeffs}
    a_{s,j}+b_{s,j}+c_{s,j} = {n-s-1\choose 2}.     
 \end{equation}
\end{lem}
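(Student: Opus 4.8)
The plan is to verify the identity~\eqref{eq:sumcoeffs} by a direct algebraic computation, substituting the explicit formulas for the weights
$$a_{s,j} = \binom{n-s-j-2}{2}, \qquad b_{s,j} = \binom{j+1}{2}, \qquad c_{s,j} = (n-s-j-2)(j+1),$$
and showing their sum equals $\binom{n-s-1}{2}$. First I would set $m := n-s-2$, so that $a_{s,j} = \binom{m-j}{2}$, $b_{s,j} = \binom{j+1}{2}$, and $c_{s,j} = (m-j)(j+1)$, and the claimed right-hand side becomes $\binom{m+1}{2}$. Writing everything out, $a_{s,j}+b_{s,j}+c_{s,j} = \tfrac12(m-j)(m-j-1) + \tfrac12(j+1)j + (m-j)(j+1)$.

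The key step is to recognize this as the Vandermonde-type identity $\binom{x}{2} + \binom{y}{2} + xy = \binom{x+y}{2}$, applied with $x = m-j$ and $y = j+1$: indeed $\binom{x}{2}+\binom{y}{2}+xy = \binom{x+y}{2}$ holds because it is precisely the count of $2$-subsets of an $(x+y)$-set split according to whether both elements lie in the first block, both in the second block, or one in each. Since $x+y = (m-j)+(j+1) = m+1$, this yields $a_{s,j}+b_{s,j}+c_{s,j} = \binom{m+1}{2} = \binom{n-s-1}{2}$, as desired. One should also note that when some of the weights vanish because an index falls outside the admissible range $0 \le j \le s \le n-4$ — the "zero otherwise" clause — the identity still reads correctly, since in those boundary situations the corresponding binomial $\binom{x}{2}$ or $\binom{y}{2}$ or product $xy$ is also zero under the standard convention, so no separate case analysis is needed beyond remarking on it.

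There is essentially no obstacle here: the statement is an elementary binomial identity once the substitution is made. The only thing requiring a small amount of care is bookkeeping the index shift and confirming that the combinatorial interpretation of the three summands — choosing two stems, choosing two bouquets, or choosing one of each — is exactly the partition underlying $\binom{x}{2}+\binom{y}{2}+xy=\binom{x+y}{2}$, which also explains \emph{why} the lemma is true rather than merely \emph{that} it is: the total $\binom{n-s-1}{2}$ counts all unordered pairs of boughs available to be agglomerated at that step, and the three weights count those pairs sorted by type. I would present the proof as this one-line combinatorial identity together with the index translation, keeping the display short:
\begin{equation*}
a_{s,j}+b_{s,j}+c_{s,j} = \binom{m-j}{2} + \binom{j+1}{2} + (m-j)(j+1) = \binom{m+1}{2} = \binom{n-s-1}{2},
\end{equation*}
with $m = n-s-2$, and a sentence remarking that the boundary cases are covered by the standard conventions on binomial coefficients.
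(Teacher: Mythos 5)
Your proof is correct, and at its core it is the same direct verification the paper gives: substitute the explicit weights and check that the sum telescopes to $\binom{n-s-1}{2}$. Where you differ is in packaging: the paper grinds through the algebra by expanding each term over a common denominator and completing the square, whereas you recognize the sum as the instance $\binom{x}{2}+\binom{y}{2}+xy=\binom{x+y}{2}$ with $x=n-s-j-2$ (stems) and $y=j+1$ (bouquets), which both shortens the computation and explains the lemma: the right-hand side counts all unordered pairs of boughs available at that step, and the three weights partition those pairs by type ($\alpha$, $\beta$, $\gamma$). This is a real improvement in exposition --- the paper's displayed derivation in fact contains a couple of transcription slips in the cross term (it writes $2(n{-}s{-}j{-}2)j$ where $2(n{-}s{-}j{-}2)(j{+}1)$ is needed for the subsequent perfect square), which your route avoids entirely. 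Your remark about the boundary conventions is also apt, though not strictly needed for the range $0\le j\le s\le n-4$ in which the weights are actually used.
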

\begin{proof}
From definition, equation~\eqref{eq:sumcoeffs} can be written as
\begin{align*}
    &{n{-}s{-}j{-}2\choose 2} + {j{+}1\choose 2} + {n{-}s{-}j{-}2\choose 1}{j{+}1\choose 1} = \\
    &\hspace{80pt}=  \frac{(n{-}s{-}j{-}2)(n{-}s{-}j{-}3)}{2} + \frac{(j{+}1)j}{2} + \frac{2(n{-}s{-}j{-}2)j}{2} \\
    &\hspace{80pt}= \frac{(n{-}s{-}j{-}2)^2-(n{-}s{-}j{-}2)}{2} + 
    \frac{(j{+}1)^2-(j{+}1)}{2} + \frac{2(n{-}s{-}j{-}2)j}{2}\\
    &\hspace{80pt}= \frac{(n{-}s{-}j{-}2)^2+ 2j(n{-}s{-}j{-}2) + (j{+}1)^2}{2} - \frac{(n{-}s{-}j{-}2) + (j{+}1)}{2}\\
    &\hspace{80pt}= \frac{\left((n{-}s{-}j{-}2)+(j{+}1)\right)^2}{2} - \frac{(n{-}s{-}j{-}2) + (j{+}1)}{2} \\
    &\hspace{80pt}= \frac{(n{-}s{-}1)(n{-}s{-}2)}{2} \quad = \quad {n-s-1 \choose 2}.
\end{align*}
\end{proof}

\begin{lem}\label{MainLemma}
  Let $n\geq 4$, and for $2\leq s \leq n-2$, we have
  \begin{equation}\label{eq:main_sum}
      \sum_{j=0}^{s} M_{n-s-2, j} = {s+2\choose 2} \sum_{j=0}^{s'} M_{n-s-3,j},
  \end{equation}
  with the sum in the right-hand side ending at
  $$
  s' = \begin{cases} 
    s+1 &\mbox{if }\quad  n{-}s{-}2 > \lfloor\frac{n-2}{2}\rfloor+1, \\
    s+1 &\mbox{if }\quad n{-}s{-}2 = \lfloor\frac{n-2}{2}\rfloor+1, \mbox{and } n\mbox{ is even} ,\\
    s &\mbox{if }\quad n{-}s{-}2 = \lfloor\frac{n-2}{2}\rfloor+1, \mbox{and } n\mbox{ is odd} ,\\
    s-1 & \mbox{if }\quad n{-}s{-}2 \leq \lfloor\frac{n-2}{2}\rfloor.
    \end{cases}
  $$
\end{lem}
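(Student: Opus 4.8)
The plan is to prove~\eqref{eq:main_sum} by one application of the weighted Motzkin recursion~\eqref{Eq:WeightedMotzkinTriangleDefn}, followed by the collapse of the resulting coefficients through Lemma~\ref{Lem:sumcoeffs}, and then a truncation of the summation index that produces the four cases. Throughout write $m = n-s-2$, so the left side of~\eqref{eq:main_sum} is $\sum_{j=0}^{s}M_{m,j}$, the right side involves $M_{m-1,j}=M_{n-s-3,j}$, and $\binom{s+2}{2}=\binom{n-m}{2}$. I assume $m\ge 1$, that is $s\le n-3$, so that $M_{m-1,\cdot}$ is defined. I will use freely the elementary fact that $M_{\ell,j}=0$ whenever $j<0$ or $j>\ell$ (a weighted partial Motzkin path of length $\ell$ cannot end above level $\ell$), which also follows by induction from~\eqref{Eq:WeightedMotzkinTriangleDefn} and $M_{0,0}=1$.

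First I expand each term on the left by~\eqref{Eq:WeightedMotzkinTriangleDefn} with $k=m-1$,
\[
\sum_{j=0}^{s}M_{m,j}
=\sum_{j=0}^{s}\Bigl(a_{m-1,j-1}M_{m-1,j-1}+c_{m-1,j}M_{m-1,j}+b_{m-1,j+1}M_{m-1,j+1}\Bigr),
\]
and reindex the three pieces separately: the $a$-piece becomes $\sum_{j=0}^{s-1}a_{m-1,j}M_{m-1,j}$ (the would-be term $j=-1$ drops, since $M_{m-1,-1}=0$), the $c$-piece is $\sum_{j=0}^{s}c_{m-1,j}M_{m-1,j}$, and the $b$-piece becomes $\sum_{j=1}^{s+1}b_{m-1,j}M_{m-1,j}$. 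Collecting terms, for $0\le j\le s+1$ the coefficient of $M_{m-1,j}$ is the sum of those of $a_{m-1,j},b_{m-1,j},c_{m-1,j}$ that survive the reindexing: $b_{m-1,j}$ is absent only for $j=0$, $a_{m-1,j}$ only for $j\in\{s,s+1\}$, and $c_{m-1,j}$ only for $j=s+1$. The crucial observation is that each absent weight vanishes: $b_{m-1,0}=\binom{1}{2}=0$, and since $m+s=n-2$ one computes $a_{m-1,s}=\binom{n-m-s-1}{2}=\binom{1}{2}=0$, $a_{m-1,s+1}=\binom{n-m-s-2}{2}=\binom{0}{2}=0$, and $c_{m-1,s+1}=(n-m-s-2)(s+2)=0$. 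Hence for every $0\le j\le s+1$ the coefficient of $M_{m-1,j}$ is exactly $a_{m-1,j}+b_{m-1,j}+c_{m-1,j}$, which by Lemma~\ref{Lem:sumcoeffs} equals $\binom{n-m}{2}=\binom{s+2}{2}$. This gives the preliminary identity
\[
\sum_{j=0}^{s}M_{m,j}=\binom{s+2}{2}\sum_{j=0}^{s+1}M_{m-1,j}.
\]

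It remains to truncate the sum on the right to stop at the prescribed $s'$, i.e.\ to show $M_{m-1,j}=0$ for $s'<j\le s+1$; here the four cases appear, and each comes down to comparing $s$ with the path length $m-1$. If $m>\lfloor\frac{n-2}{2}\rfloor+1$, or if $m=\lfloor\frac{n-2}{2}\rfloor+1$ with $n$ even, then $s'=s+1$ and the preliminary identity is already the claim. If $m=\lfloor\frac{n-2}{2}\rfloor+1$ with $n$ odd, then $m=\frac{n-1}{2}$ and $s=m-1$, so $s+1=m>m-1$ forces $M_{m-1,s+1}=0$, giving $s'=s$. Finally, if $m\le\lfloor\frac{n-2}{2}\rfloor$, then $s=n-m-2\ge n-2-\lfloor\frac{n-2}{2}\rfloor=\lceil\frac{n-2}{2}\rceil\ge m$, so both $s$ and $s+1$ exceed the length $m-1$, whence $M_{m-1,s}=M_{m-1,s+1}=0$ and $s'=s-1$. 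Together with the preliminary identity this proves~\eqref{eq:main_sum}.

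The only delicate step is the coefficient collection: one must pin down that $b_{m-1,0}$, $a_{m-1,s}$, $a_{m-1,s+1}$, $c_{m-1,s+1}$ are exactly the weights missing from the collected sum and that all of them are zero, which is what makes Lemma~\ref{Lem:sumcoeffs} apply uniformly over $0\le j\le s+1$ and produces the clean factor $\binom{s+2}{2}$; this depends on the explicit form of the weights. The concluding case analysis is then routine floor arithmetic, governed entirely by the trivial bound $M_{\ell,j}=0$ for $j>\ell$.
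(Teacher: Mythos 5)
Your proof is correct and follows essentially the same route as the paper's: expand the left-hand side with the weighted recursion~\eqref{Eq:WeightedMotzkinTriangleDefn}, collapse every coefficient to $\binom{s+2}{2}$ via Lemma~\ref{Lem:sumcoeffs} (you do this by observing that the boundary weights missing from the collected sum all vanish, where the paper instead computes the boundary sums $a_{n-s-3,0}+c_{n-s-3,0}$, $b_{n-s-3,s}+c_{n-s-3,s}$, and $b_{n-s-3,s+1}$ directly), and then truncate using $M_{\ell,j}=0$ for $j>\ell$. Your explicit case analysis for the value of $s'$ is in fact more careful than the paper's one-line appeal to the maximal width of the triangle at $i=\lfloor\tfrac{n-2}{2}\rfloor$.
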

\begin{proof}
The recursion~\eqref{Eq:WeightedMotzkinTriangleDefn} for weighted Motzkin paths writes the left-hand side of~\eqref{eq:main_sum} as
\begin{equation*}
    \begin{split}
        \sum_{j=0}^{s} M_{n-s-2, j} =& \sum_{j=0}^s \left(a_{n-s-3,j-1} M_{n-s-3,j-1} + c_{n-s-3,j} M_{n-s-3,j} + b_{n-s-3,j+1} M_{n-s-3,j+1}\right)\\
        =& (a_{n-s-3,0}+c_{n-s-3,0})M_{n-s-3,0} + \sum_{j=1}^{s-1}(a_{n-s-3,j} + c_{n-s-3,j} + b_{n-s-3,j})M_{n-s-3,j} \\
        & +  (b_{n-s-3,s}+c_{n-s-3,s})M_{n-s-3,s} + b_{n-s-3,s+1}M_{n-s-3,s+1}.
    \end{split}
\end{equation*}
Notice that, by Lemma~\ref{Lem:sumcoeffs}, we have 
$
a_{n-s-3,j} + c_{n-s-3,j} + b_{n-s-3,j} = {s+2\choose 2}
$, and by definition, $b_{n-s-3,s+1} = {s+2\choose 2}$. Lastly,
notice that $n-(n{-}s{-}3){-}s{-}2 = 1$, so
$$
(a_{n-s-3,0} + c_{n-s-3,0}) = (b_{n-s-3,s}+c_{n-s-3,s}) = {s+1\choose 2} + {1\choose 1}{s+1\choose 1} = {s+2\choose 2}.
$$
Hence, the sum in the right-hand side equals
$$
{s+2\choose 2} \sum_{j=0}^{s+1} M_{n-s-3,j}.
$$
We conclude the proof by noting that, when using the recursion~\eqref{Eq:WeightedMotzkinTriangleDefn} to write a sum of Motzkin paths of length $i{+}1$ in terms of those of length $i$, the maximum amount of summands that can appear in the sum is attained when $i=\lfloor\frac{n-2}{2}\rfloor$.
\end{proof}

Hence, we obtain the following formula for the number $\Phi(n)$. 

\begin{thm}\label{thm:CountNJCones}
For $n\geq 4$, let $\Phi(n)$ be the number of labeled agglomerated trees with $n$ leaves. Then,
$$
  \Phi(n) = {n \choose 2}{n-1\choose 2}\cdots {5\choose 2}{4\choose 2} 
  =\frac{n(n{-}1)!^2}{3\cdot 2^{n-1}}.$$
\end{thm}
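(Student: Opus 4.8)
The plan is to compute $\Phi(n)$ as a weighted sum over all NJ paths, where the weight of a path is the product of the combinatorial multiplicities $\binom{\ell}{2}$, $\binom{r}{2}$, $\binom{\ell}{1}\binom{r}{1}$ of its steps $\alf$, $\bet$, $\gam$; by Theorem~\ref{Thm:NJ-MotzkinPaths} and the weight assignments listed just before Lemma~\ref{Lem:sumcoeffs}, this equals the total weighted count of weighted partial Motzkin paths of length $n-4$ ending at level $0$, $1$, or $2$. Concretely, $\Phi(n) = \binom{n}{2}\cdot \sum_{j=0}^{n-4} M_{n-4,j}$, where the leading $\binom{n}{2}$ accounts for the forced first $\alf$ step from the star tree $t_n$ (choosing two of the $n$ stems), and $M_{k,j}$ now denotes the entry of the weighted Motzkin triangle with the multiplicities coming from the NJ bough-vector bookkeeping. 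So the whole statement reduces to showing $\sum_{j=0}^{n-4} M_{n-4,j} = \binom{n-1}{2}\binom{n-2}{2}\cdots\binom{4}{2}$.

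The engine for this is Lemma~\ref{MainLemma}: it says that passing from length-$(i+1)$ Motzkin sums to length-$i$ sums pulls out a factor $\binom{s+2}{2}$, where $s$ is indexed so that the sum $\sum_j M_{n-s-2,j}$ corresponds to paths of length $n-s-2$. I would set up a downward induction on the length. Starting from $\sum_{j=0}^{n-4} M_{n-4,j}$, which in the Lemma's notation is the $s = 2$ case (so the length is $n-4$), repeated application of~\eqref{eq:main_sum} strips off factors $\binom{4}{2}, \binom{5}{2}, \binom{6}{2}, \ldots$ one at a time as $s$ increases, until we reach length $0$, where the single path has weight $1$ (i.e. $M_{0,0}=1$). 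Multiplying everything out, $\sum_{j=0}^{n-4}M_{n-4,j} = \binom{4}{2}\binom{5}{2}\cdots\binom{n-1}{2}$, and prepending the first-step factor $\binom{n}{2}$ gives $\Phi(n) = \binom{n}{2}\binom{n-1}{2}\cdots\binom{4}{2}$.

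For the closed form, I would just simplify the telescoping product: $\prod_{m=4}^{n}\binom{m}{2} = \prod_{m=4}^{n}\frac{m(m-1)}{2} = \frac{1}{2^{n-3}}\prod_{m=4}^{n} m(m-1)$. The product $\prod_{m=4}^{n} m = n!/6$ and $\prod_{m=4}^{n}(m-1) = (n-1)!/6$, so $\Phi(n) = \frac{1}{2^{n-3}}\cdot\frac{n!}{6}\cdot\frac{(n-1)!}{6} = \frac{n!\,(n-1)!}{36\cdot 2^{n-3}} = \frac{n\,(n-1)!^2}{3\cdot 2^{n-1}}$, matching the claimed formula; a quick sanity check against Table~\ref{Tb:counting trees} (e.g. $\Phi(4)=6$, $\Phi(5)=60$, $\Phi(6)=900$) confirms the normalization.

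The main obstacle is the careful bookkeeping in the induction: the endpoint parameter $s'$ in Lemma~\ref{MainLemma} shifts depending on the parity of $n$ and on whether the current length has passed the midpoint $\lfloor (n-2)/2\rfloor$, reflecting that a weighted Motzkin path of length $i$ cannot reach level higher than $i$, so the naive recursion would introduce spurious summands $M_{k,j}$ with $j>k$ that must vanish. I need to check that at each step the range $j=0,\dots,s$ on the left matches the effective range on the right (after the vanishing of out-of-range terms), so that the factor $\binom{s+2}{2}$ is pulled out cleanly with no leftover boundary corrections — this is exactly what the case analysis in Lemma~\ref{MainLemma} is designed to guarantee, and invoking it correctly at each stage of the induction is the one place where care is required. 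Everything else is routine algebra.
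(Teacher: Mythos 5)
Your proof follows essentially the same route as the paper's: reduce $\Phi(n)$ to the weighted partial Motzkin sum $M_{n-4,0}+M_{n-4,1}+M_{n-4,2}$ (your sum up to $j=n-4$ is the same thing, since the higher-level weighted terms vanish), peel off the factors $\binom{4}{2},\dots,\binom{n-1}{2}$ by iterating Lemma~\ref{MainLemma}, and account for the forced first $\alf$ step by the factor $\binom{n}{2}$ — the paper absorbs that factor by setting $M_{0,0}=\binom{n}{2}$ while you keep it as a prefactor, which is only a cosmetic difference. The one flaw is a self-cancelling arithmetic slip in the closed form: $\prod_{m=4}^{n}(m-1)=3\cdot 4\cdots(n-1)=(n-1)!/2$, not $(n-1)!/6$, so the intermediate expression should be $\frac{n!\,(n-1)!}{12\cdot 2^{n-3}}$, which does equal the stated $\frac{n\,(n-1)!^{2}}{3\cdot 2^{n-1}}$ (your written intermediate $\frac{n!\,(n-1)!}{36\cdot 2^{n-3}}$ would give a $9$ in the denominator instead of a $3$).
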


\begin{proof}
From Theorem~\ref{Thm:NJ-MotzkinPaths},
computing $\Phi(n)$ is equivalent
to computing $M_{n-4,0} + M_{n-4,1} + M_{n-4,2}$ in the weighted version
of the partial Motzkin paths. From Lemma~\ref{MainLemma} we obtain
\begin{equation*}
\begin{split}
    M_{n-4,0} + M_{n-4,1} + M_{n-4,2} = & \ 
    {4\choose 2}\sum_{j=0}^3 M_{n-5,j}.
\end{split}    
\end{equation*}
We can use Lemma~\ref{MainLemma} again to compute the sum in the right-hand side. Continuing recursively we see that
$$
M_{n-4,0} + M_{n-4,1} + M_{n-4,2} = {4\choose 2}\cdots {n-1\choose 2} M_{0,0}.
$$
Defining $M_{0,0} = {n\choose 2}$ we obtain the result. 
The last equality in the theorem comes from writing ${n\choose 2}$ as $\frac{n(n-1)}{2}$ and expanding the product of binomial coefficients.
\end{proof}

We conclude this section by noticing that this formula for $\Phi(n)$ produces the sequence of the last column of Table~\ref{Tb:counting trees}; thus, the number of trees output by the NJ algorithm is $\Phi(n)/2$.
Moreover, the sequence formed by our formula in Theorem~\ref{thm:CountNJCones} does not appear in~\cite{OEIS}; however, it can be obtained from the product of consecutive binomial coefficients.
This product forms the sequence A006472, which counts the number of ranked trees.
Thus, our sequence is obtained from A006472 starting from $n=4$ by dividing each element by 3. This suggest a connection between agglomerated trees and ranked trees~\cite{DW13}.



\section{Estimated volumes}\label{S:volumes}

The Neighbor-Joining and UPGMA algorithms take dissimilarity maps as inputs and return trees with an additive dissimilarity map.  The decisions in both algorithms are based on linear inequalities that divide $\R^{n \choose 2}$ into half-spaces.
To elucidate, label the coordinates of $\R^{n \choose 2}$ with the 2-element subsets of $[n]=\{1,2,\ldots,n\}$, so that the symmetric matrix $D_n$ is a point in $\R^{n \choose 2}$.
Recall, for instance, the example 
in the beginning of Section~\ref{S:combinatoricsNJ}.
The two matrices $D$ and $D'$ from~\eqref{Eqn:DandD'Dissimilarity}
are both in correspondence with the tree in Figure~\ref{F:example1}.
For the matrix $D$, the tree returned by the NJ algorithm can be represented as $((d,(a,b)),c,e)$ in Newick format.
This means that the first cherry formed by the algorithm was $(a,b)$. 
For this to hold true, the $Q$-criterion of the matrix $D$ must satisfy the following 9 inequalities:
\begin{equation}\label{Eq:halfspaces}
Q(a,b) \leq Q(a,c), Q(a,d), 
\ldots, Q(c,e), Q(d,e).
\end{equation}
Since each entry $Q(i,j)$ is a linear equation on the entries of $D$, these equations form semialgebraic sets in $\R^{15}$. All input matrices $D_5$ lying in the intersection of the 9 half-spaces listed in~\eqref{Eq:halfspaces} will return 
two possible agglomerated trees.
In general, these halfspaces form a polyhedral cone in $\R^{n\choose 2}$, and every input that satisfies them will 
receive the same output.
Thus, we identify this cone with the two labeled agglomerated trees.  
For $D'$ the NJ algorithm associates the trees $((d,(c,e)),a,b)$ and $((a,b),(c,e),d)$. Thus, the 9 inequalities for $D'$ are not the same as those for $D$. Therefore, the trees lie on different cones, even though they are the same topologically.

Eickmeyer et.al.~\cite{EHLY08,EY07} studied the defining inequalities of the NJ cones for small taxa.  In other words, the cones were described using hyperplanes, also known as an $H$-representation.  
To give a a full combinatorial description, one requires a $V$-representation: to describe the vectors for the extreme rays of the cones.
In contrast, for the UPGMA algorithm~\cite{SM58}, a $V$-representation for the corresponding polyhedral cones was given in \cite{DS13}. 

\begin{rem}\label{rem:bias}
Ideally, a phylogenetic method would depend only on the quality of the data it receives as input, so we would expect it to return each tree with equal probability. This would imply that the algorithm has no bias towards a specific type of tree.
\end{rem}

One can estimate this probability by measuring the \demph{spherical volume} of the polyhedral cones, which is the proportional volume when intersecting the cones with the unit sphere in $\R^{n \choose 2}   $.
A bias in the algorithm towards some trees can be detected when the spherical volume is not roughly the same for each tree.
Computational methods based in this paradigm are used to evaluate the performance of NJ as a heuristic for LSP in \cite{DS14} and of NJ as a heuristic for BME in \cite{EHLY08}.  
Davidson and Sullivant~\cite{DS13} approximated the spherical volume of the UPGMA cones finding cones with considerable less volume than others, unveiling a bias of the algorithm towards balanced trees (rooted trees with two children of the root defining subtrees with a similar amount of leaves).

We studied the approximated spherical volume of the NJ cones, by analyzing how simulated data lies in the cones.
We implemented the NJ algorithm in Mathematica~\cite{Math}.
Our implementation takes as input a dissimilarity map, represented as a symmetric real matrix, and returns the agglomerated tree as well as all the agglomeration steps.
Our software, as well as the results of the computations discussed in the rest of the paper, is available at~\cite{WWWNJCones}.

\subsection{Simulation results} 
In order to approximate the volume of the polyhedral cones, 
for $n=4,\ldots, 8$, we uniformly sampled 
1,000,000
data matrices lying inside the unit sphere and the positive orthant of $ \R^{n \choose 2}   $.  For each, our software computed its corresponding agglomerated tree.
We counted the number of matrices that were associated to each agglomerated tree.

Table~\ref{tbl:4taxaRegular} displays our results for 4 taxa. The first column and third represents the  agglomerated trees in Newick format, each row has the two trees identified by the NJ algorithm. The second and forth columns show the percentage of the 
million random instances output by NJ. 
Note that in this case, the NJ algorithm should output 3 trees, due to Lemma~\ref{L:double_minima}. 
Hence, we should expect that three of the six trees should not had been observed. 
However, the three trees in the first column accumulated only 86.7771\% of the random input matrices, and the remaining instances were equally distributed among the other three cones.
We noted this behavior is due to the way Mathematica resolves the double minimum conflict.
%
\begin{table}[htb]
\caption{Percentage of the 1,000,000 samples in each agglomerated tree with 4 taxa.}
\label{tbl:4taxaRegular}
\begin{tabular}{|c|r||c|r|}
\hline
\multicolumn{4}{|c|}{$4$ taxa}                       \\ \hline
{\bf Tree}                           & {\bf Percentage} & {\bf Tree}     & {\bf Percentage} \\ \hline
$((ab)cd)$     & 29.1610              & 
$((cd)ab)$  & 4.1150               \\ 
$((ac)bd)$ & 28.9944              & 
$((bd)ac)$  & 4.4149              \\ 
$((ad)bc)$  & 28.6217              & 
$((bc)ad)$  & 4.6930               \\ \hline
\end{tabular}
\end{table}
%

Each row of Table~\ref{tbl:4taxaRegular}
reports the observed instances in each polyhedral cone, so adding the corresponding percentage for each row gives the distribution of the instances lying in each cone. Here, we noted that the row sum is around 33.333\%, showing that each instance is equally distributed among the three cones.  In light of this fact the NJ algorithm does not have a bias resulting from variation in cone size in the case of 4 taxa.  We shall see this is not the case for larger numbers of taxa.  In the case where $n \geq 5$ the last decision step of NJ can be studied in $\mathbb{R}^{6}$, but 
at least one of $\{a,b,c,d\}$ is a bouquet of size at least 2.  

We remark that all implementations have to resolve the decision of the tie in the last step, and the original paper of Saitou and Nei~\cite{NJ87} noticed the tie in equations (11b) and (11c), suggesting to resolve a tie
between $(ab)$ and $(cd)$ by choosing to agglomerate the one with the smallest index (lexicographic), in this case $(ab)$.  
We call this a \demph{representative bias},
and we explored two ways to correct it. 

\subsection{Representative bias correction} 
There could be many ways to resolve the tie in the last step of the algorithm.
One could choose a distinguished set of pairs to resolve the clash of minima.
For instance, we could always choose to agglomerate the pair involving the lowest (or highest) index. This choice would make it so that all instances in Table~\ref{tbl:4taxaRegular} would
be equally distributed between the trees $((1,2),3,4), ((1,3),2,4)$, and $((1,4),2,3)$, and the other three would not observe any instances. This could be desirable for practical reasons. 
However, this convenient choice has an impact on the  interpretation of the output, 
as we would be choosing beforehand a step in which the algorithm joins specific taxa. 
Thus, we explored the impact of some different choices.

Besides from the lexicographic choice, another natural way to correct the ambiguity in the last step of the NJ algorithm is to 
choose uniformly at random one of the two coinciding agglomerations. We repeated the experiment with this correction in our software, and found this choice equally distributes the number of instances observed in each agglomerated tree. We refer to this representative bias correction just as \demph{Uniform}.
Table~\ref{tbl:4taxaUniform} displays the results for 4 taxa after this correction.
\begin{table}[htb]
\begin{tabular}{|c|r||c|r|}
\hline
\multicolumn{4}{|c|}{Uniform bias correction}          \\ \hline
\multicolumn{4}{|c|}{$4$ taxa} \\ \hline
{\bf Tree}                           & {\bf Percentage} & {\bf Tree}     & {\bf Percentage} \\ \hline
$((ab)cd)$     & 16.8740              & 
$((cd)ab)$  & 16.4977               \\ 
$((ac)bd)$ & 16.7543              & 
$((bd)ac)$  & 16.5304              \\ 
$((ad)bc)$  & 16.5622              & 
 $((bc)ad)$  & 16.7814               \\ \hline
\end{tabular}
\caption{Percentage of 1,000,000 samples found in each agglomerated tree.}
\label{tbl:4taxaUniform}
\end{table}

Lastly, we also consider another 
representative bias correction method, that considers previous agglomeration information. As noticed, the last step of the algorithm could have to decide between joining two sets of taxa $A, B$, over other two $C, D$, and we could take biological information in consideration for making this choice. For instance, one could choose the representative that joins the pair containing more taxa: we call this representative bias correction method \demph{Baggage}. If, in the last step, the NJ algorithm has to decide between an $\alf$ or a $\bet$ step, Baggage would chose $\bet$ over $\alf$ as it is joining two bouquets having at least 2 leaves in each, as opposed to $\alf$ that is joining two stems, i.e., two leaves.
Formally, we think of the NJ algorithm as joining sets of taxa, starting from a list of $n$ singletons, and stopping until we obtain three taxa sets. 
For a taxa set $A$ we let $|A|$ denote the size of $A$, which is the number of taxa it contains (alternatively, we can think of $|A|$ as the size of a bough). Then, Baggage chooses to join taxa sets $A$ and $B$ over $C$ and $D$, if $|A|+|B| > |C|+|D|$. 
If both pairs of sets have the same taxa, so  $|A|+|B| $ equals $ |C|+|D|$, we could use the uniform method to solve this case. We note that making the opposite choice --joining a pair $A, B$ that is more balanced-- could produce a representative bias towards topologically balanced trees similar to the one discovered in the UPGMA algorithm~\cite{DS14} and observed in NJ in some cases for $n \leq 14$. 
%

For $n=4$, the Baggage correction and the Uniform coincide, but this is not the case for $n\geq 5$. We show in Table~\ref{tbl:5taxaALL} the comparison for 5 taxa of our 
implementation without representative bias correction (column labeled Regular), versus the Uniform, and Baggage bias correction methods. 
To interpret the results, in each row we display the percentage obtained for the two trees identified by the NJ algorithm. We notice that for every row, the sum of the entries of each representative bias correction for both trees is around 3.3333\%. Thus, 
from Remark~\ref{rem:bias} the NJ method have no bias. However, if the NJ method had no representative bias, each agglomerated tree should be returned with probability 1.666\%, which is very close to what is obtained in the Uniform method.
%
\begin{longtable}{|c|c|c|c||c|c|c|c|}
\hline
\multicolumn{8}{|c|}{$5$ taxa}                       \\ \hline
{\bf Tree} & {\bf Regular} & {\bf Uniform} & {\bf Baggage} &
{\bf Tree} & {\bf Regular} & {\bf Uniform} & {\bf Baggage} \\ \hline
$((bd)(ac)e)$
&  2.9818
&  1.6625
&  0.3924
&
$((e(ac))bd)$
&  0.3349
&  1.6562
&  2.9713
\\
$((ad)(bc)e)$
&  3.0313
&  1.6897
&  0.3798
&
$((e(bc))ad)$
&  0.3401
&  1.6503
&  2.9405
\\ 
$((cd)(ab)e)$
&  3.0006
&  1.7055
&  0.3833
&
$((e(ab))cd)$
&  0.3384
&  1.6309
&  2.9239
\\ 
$((bc)(ad)e)$
&  2.9995
&  1.6981
&  0.3864
&
$((e(ad))bc)$
&  0.3362
&  1.6335
&  2.9338
\\ 
$((ae)(cd)b)$
&  2.9926
&  1.6741
&  0.3619
&
$((b(cd))ae)$
&  0.3553
&  1.6657
&  2.9849
\\ 
$((ab)(cd)e)$
&  3.0014
&  1.6839
&  0.3991
&
$((e(cd))ab)$
&  0.3369
&  1.6553
&  2.9486
\\ 
$((bc)(ae)d)$
&  3.0251
&  1.6716
&  0.3839
&
$((d(ae))bc)$
&  0.3347
&  1.6510
&  2.9594
\\ 
$((bd)(ae)c)$
&  2.9969
&  1.6679
&  0.3538
&
$((c(ae))bd)$
&  0.3689
&  1.6716
&  2.9748
\\ 
$((ce)(ab)d)$
&  2.9730
&  1.6668
&  0.3619
&
$((d(ab))ce)$
&  0.3524
&  1.6545
&  2.9693
\\ 
$((a(bc))de)$
&  2.9880
&  1.6459
&  2.9704
&
$((de)(bc)a)$
&  0.3717
&  1.6588
&  0.3558
\\ 
$((ae)(bd)c)$
&  2.9930
&  1.6647
&  0.3560
&
$((c(bd))ae)$
&  0.3581
&  1.6634
&  2.9807
\\ 
$((be)(ad)c)$
&  2.9162
&  1.6854
&  0.3611
&
$((c(ad))be)$
&  0.3510
&  1.6643
&  2.9518
\\ 
$((b(ac))de)$
&  2.9415
&  1.6558
&  2.9923
&
$((de)(ac)b)$
&  0.3825
&  1.6723
&  0.3709
\\ 
$((ae)(bc)d)$
&  2.9802
&  1.6918
&  0.3610
&
$((d(bc))ae)$
&  0.3489
&  1.6358
&  2.9594
\\ 
$((ac)(be)d)$
&  3.0146
&  1.7032
&  0.3772
&
$((d(be))ac)$
&  0.3459
&  1.6668
&  2.9248
\\ 
$((c(ab))de)$
&  2.9679
&  1.6371
&  2.9771
&
$((de)(ab)c)$
&  0.3800
&  1.6696
&  0.3582
\\ 
$((a(cd))be)$
&  2.9911
&  1.6576
&  2.9827
&
$((be)(cd)a)$
&  0.3826
&  1.6674
&  0.3530
\\ 
$((a(be))cd)$
&  2.9537
&  1.6768
&  2.9807
&
$((cd)(be)a)$
&  0.3776
&  1.6902
&  0.3647
\\
$((a(ce))bd)$
&  2.9624
&  1.6526
&  2.9743
&
$((bd)(ce)a)$
&  0.3668
&  1.6915
&  0.3623
\\ 
$((ad)(ce)b)$
&  2.9964
&  1.6863
&  0.3570
&
$((b(ce))ad)$
&  0.3746
&  1.6500
&  2.9664
\\ 
$((ad)(be)c)$
&  2.9102
&  1.6850
&  0.3823
&
$((c(be))ad)$
&  0.3603
&  1.6433
&  2.9816
\\ 
$((be)(ac)d)$
&  2.9823
&  1.6898
&  0.3651
&
$((d(ac))be)$
&  0.3610
&  1.6402
&  2.9838
\\ 
$((a(de))bc)$
&  2.9475
&  1.6272
&  2.9928
&
$((bc)(de)a)$
&  0.3823
&  1.6826
&  0.3604
\\ 
$((b(ad))ce)$
&  2.9571
&  1.6316
&  2.9331
&
$((ce)(ad)b)$
&  0.3788
&  1.6946
&  0.3600
\\ 
$((ac)(de)b)$
&  2.9495
&  1.6927
&  0.3682
&
$((b(de))ac)$
&  0.3603
&  1.6336
&  2.9816
\\ 
$((ab)(ce)d)$
&  2.9766
&  1.6883
&  0.3851
&
$((d(ce))ab)$
&  0.3364
&  1.6620
&  2.9377
\\ 
$((ab)(de)c)$
&  2.9929
&  1.6864
&  0.3712
&
$((c(de))ab)$
&  0.3298
&  1.6542
&  2.9533
\\ 
$((b(ae))cd)$
&  2.9349
&  1.6516
&  2.9651
&
$((cd)(ae)b)$
&  0.3642
&  1.6660
&  0.3585
\\ 
$((a(bd))ce)$
&  2.9472
&  1.6508
&  2.9784
&
$((ce)(bd)a)$
&  0.3736
&  1.6882
&  0.3614
\\ 
$((ac)(bd)e)$
&  2.9800
&  1.7034
&  0.3772
&
$((e(bd))ac)$
&  0.3304
&  1.6521
&  2.9564
\\ 
\hline
\caption{Fraction of 1,000,000 samples in each tree for different bias corrections.}
\label{tbl:5taxaALL}
\end{longtable}
In Table~\ref{tbl:5taxaALL} we see again that our original implementation had an unexpected bias due to the way Mathematica sorted the double minima conflict in the last step. The values in each row corresponding to the column Regular should be zero for all the trees in the columns of the right-hand side, according to the lexicographic choice.
The Uniform method seems to correct the representative bias completely, making each agglomerated tree occur with equal probability. Lastly, the Baggage method creates a representative bias towards agglomerated trees formed with less $\alf$ steps. 
There could be biological reasons to correct the bias to agree with what is observed, such as high confidence in the appearance of a subtree within a larger taxon set.   
We realized these computations for $n=4,\ldots, 8$ taxa, obtaining similar results to those displayed in Table~\ref{tbl:5taxaALL} and are available at~\cite{WWWNJCones}.

We conclude by remarking that 
it is left to understand other reasonable ways to explore methods of bias correction, and understand their combinatorial implications.
Knowing about different representative biases that can arise in the algorithm
not only 
helps to develop other uses for this agglomerative method,
but 
also to aid in the study of bias from other heuristics for LSP or BME as well as phylogenetic pipelines that contain NJ as a component. 

\bibliographystyle{siam}
\bibliography{refs}

\end{document}